\renewcommand*\env@matrix[1][\arraystretch]{%
  \edef\arraystretch{#1}%
  \hskip -\arraycolsep
  \let\@ifnextchar\new@ifnextchar
  \array{*\c@MaxMatrixCols c}}
\newtheorem{theorem}{{\sc Theorem}}[section]
\newtheorem{lemma}[theorem]{{\sc Lemma}}
\newtheorem{remark}[theorem]{Remark}
\newtheorem{definition}[theorem]{Definition}
\newtheorem{conjecture}[theorem]{Conjecture}
\newcommand{\cof}{\mathrm{cof}}
\newcommand{\rank}{\mathrm{rank}}
\def\XXint#1#2#3{{\setbox0=\hbox{$#1{#2#3}{\int}$ }
\vcenter{\hbox{$#2#3$ }}\kern-.6\wd0}}
\bmdefine\BGa{\alpha}
\bmdefine\BGb{\beta}
\bmdefine\BGd{\delta}
\bmdefine\BGe{\epsilon}
\bmdefine\BGve{\varepsilon}
\bmdefine\BGf{\phi}
\bmdefine\BGvf{\varphi}
\bmdefine\BGg{\gamma}
\bmdefine\BGc{\chi}
\bmdefine\BGi{\iota}
\bmdefine\BGk{\kappa}
\bmdefine\BGl{\lambda}
\bmdefine\BGn{\eta}
\bmdefine\BGm{\mu}
\bmdefine\BGv{\nu}
\bmdefine\BGp{\pi}
\bmdefine\BGth{\theta}
\bmdefine\BGvth{\vartheta}
\bmdefine\BGr{\rho}
\bmdefine\BGvr{\varrho}
\bmdefine\BGs{\sigma}
\bmdefine\BGvs{\varsigma}
\bmdefine\BGt{\tau}
\bmdefine\BGj{\tau}
\bmdefine\BGu{\upsilon}
\bmdefine\BGo{\omega}
\bmdefine\BGx{\xi}
\bmdefine\BGy{\psi}
\bmdefine\BGz{\zeta}
\bmdefine\BGD{\Delta}
\bmdefine\BGF{\Phi}
\bmdefine\BGG{\Gamma}
\bmdefine\BGL{\Lambda}
\bmdefine\BGP{\Pi}
\bmdefine\BGT{\Theta}
\bmdefine\BGS{\Sigma}
\bmdefine\BGU{\Upsilon}
\bmdefine\BGO{\Omega}
\bmdefine\BGX{\Xi}
\bmdefine\BGY{\Psi}
\bmdefine\BCA{{\mathcal A}}
\bmdefine\BCB{{\mathcal B}}
\bmdefine\BCC{{\mathcal C}}
\bmdefine\BCD{{\mathcal D}}
\bmdefine\BCE{{\mathcal E}}
\bmdefine\BCF{{\mathcal F}}
\bmdefine\BCG{{\mathcal G}}
\bmdefine\BCH{{\mathcal H}}
\bmdefine\BCI{{\mathcal I}}
\bmdefine\BCJ{{\mathcal J}}
\bmdefine\BCK{{\mathcal K}}
\bmdefine\BCL{{\mathcal L}}
\bmdefine\BCM{{\mathcal M}}
\bmdefine\BCN{{\mathcal N}}
\bmdefine\BCO{{\mathcal O}}
\bmdefine\BCP{{\mathcal P}}
\bmdefine\BCQ{{\mathcal Q}}
\bmdefine\BCR{{\mathcal R}}
\bmdefine\BCS{{\mathcal S}}
\bmdefine\BCT{{\mathcal T}}
\bmdefine\BCU{{\mathcal U}}
\bmdefine\BCV{{\mathcal V}}
\bmdefine\BCW{{\mathcal W}}
\bmdefine\BCX{{\mathcal X}}
\bmdefine\BCY{{\mathcal Y}}
\bmdefine\BCZ{{\mathcal Z}}
\bmdefine\Bzr{ 0}
\bmdefine\Ba{ a}
\bmdefine\Bb{ b}
\bmdefine\Bc{ c}
\bmdefine\Bd{ d}
\bmdefine\Be{ e}
\bmdefine\Bf{ f}
\bmdefine\Bg{ g}
\bmdefine\Bh{ h}
\bmdefine\Bi{ i}
\bmdefine\Bj{ j}
\bmdefine\Bk{ k}
\bmdefine\Bl{ l}
\bmdefine\Bm{ m}
\bmdefine\Bn{ n}
\bmdefine\Bo{ o}
\bmdefine\Bp{ p}
\bmdefine\Bq{ q}
\bmdefine\Br{ r}
\bmdefine\Bs{ s}
\bmdefine\Bt{ t}
\bmdefine\Bu{ u}
\bmdefine\Bv{ v}
\bmdefine\Bw{ w}
\bmdefine\Bx{ x}
\bmdefine\By{ y}
\bmdefine\Bz{ z}
\bmdefine\BA{ A}
\bmdefine\BB{ B}
\bmdefine\BC{ C}
\bmdefine\BD{ D}
\bmdefine\BE{ E}
\bmdefine\BF{ F}
\bmdefine\BG{ G}
\bmdefine\BH{ H}
\bmdefine\BI{ I}
\bmdefine\BJ{ J}
\bmdefine\BK{ K}
\bmdefine\BL{ L}
\bmdefine\BM{ M}
\bmdefine\BN{ N}
\bmdefine\BO{ O}
\bmdefine\BP{ P}
\bmdefine\BQ{ Q}
\bmdefine\BR{ R}
\bmdefine\BS{ S}
\bmdefine\BT{ T}
\bmdefine\BU{ U}
\bmdefine\BV{ V}
\bmdefine\BW{ W}
\bmdefine\BX{ X}
\bmdefine\BY{ Y}
\bmdefine\BZ{ Z}
\begin{document}
\title{On the extreme rays of the cone of $3\times 3$ quasiconvex quadratic forms: Extremal determinants vs extremal and polyconvex forms}
\date{}
\author{Davit Harutyunyan\thanks{University of California Santa Barbara, harutyunyan@math.ucsb.edu}
and Narek Hovsepyan\thanks{Temple University, narek.hovsepyan@temple.edu}}

\maketitle
\begin{abstract}
This work is concerned with the study of the extreme rays of the convex cone of $3\times 3$ quasiconvex quadratic forms (denoted by ${\cal C}_3$). We characterize quadratic forms $f\in {\cal C}_3,$ the determinant of the acoustic tensor of which is an extremal polynomial, and conjecture/discuss about other cases. We prove that in the case when the determinant of the acoustic tensor of a form $f\in {\cal C}_3$ is an extremal polynomial other than a perfect square, then the form must itself be an extreme ray of ${\cal C}_3;$ when the determinant is a perfect square, then the form is either an extreme ray of ${\cal C}_3$ or polyconvex; and finally, when the determinant is identically zero, then the form $f$ must be polyconvex. The zero determinant case plays an important role in the proofs of the other two cases. We also
make a conjecture on the extreme rays of ${\cal C}_3,$ and discuss about weak and strong extremals of ${\cal C}_d$ for $d\geq 3,$ where it turns out that several properties of ${\cal C}_3$ do not hold for ${\cal C}_d$ for $d>3,$ and thus case $d=3$ is special. These results recover all previously known results (to our best knowledge) on examples of extreme points of ${\cal C}_3$ that were proved to be such. Our results also improve the ones proven by the first author and Milton [Comm. Pure Appl. Math., Vol. 70, Iss. 11, Nov. 2017, pp. 2164-2190] on weak extremals in ${\cal C}_3$ (or extremals in the sense of Milton) introduced in [Comm. Pure Appl. Math., Vol. XLIII, 63-125 (1990)].

In the language of positive biquadratic forms, quasiconvex quadratic forms correspond to nonnegative biquadratic forms and the results read as follows: If the determinant of the $\By$ (or $\Bx$) matrix of a $3\times 3$ nonnegative biquadratic form in $\Bx,\By \in\mathbb R^3$ is an extremal polynomial that is not a perfect square, then the form must be an extreme ray of the convex cone of $3\times 3$ nonnegative biquadratic forms $({\cal C}_3);$ if the determinant is identically zero, then the form must be a sum of squares; if the determinant is a nonzero perfect square, then the form is either an extreme ray of ${\cal C}_3,$ or is a sum of squares.

The proofs are all established by means of several classical results from linear algebra, convex analysis (geometry), real algebraic geometry, and the calculus of variations.

\end{abstract}

\textbf{Keywords:}\ \  Quasiconvex quadratic forms, positive biquadratic forms, sums of squares, polyconvexity, rank-one convexity.

\vspace{0.5cm}

\textbf{Mathematics Subject Classification:}\ \ 12D15, 12E10, 15A63, 49J40, 70G75, 74B05, 74B20,

\section{Introduction}
\setcounter{equation}{0}
\label{sec:1}

Let us point out from the onset that as we are applied mathematicians, the paper is written in the applied mathematics/calculus of variations language. However, the subject is in the intersection of the fields of \textbf{applied mathematics/calculus of variations} and \textbf{real algebraic geometry/convex geometry,} thus we have drawn some appropriate links between those two fields of mathematics in terms of language and results that we can understand.\\
 \textbf{Quasicovex quadratic forms and sums of squares: From applied mathematics to real algebraic geometry.} Quasiconvexity is a central subject in the calculus of variations and in applied mathematics. It was introduced by Morrey in 1952 [\ref{bib:Morrey.1},\ref{bib:Morrey.2}] and has several equivalent definitions, among which the simplest looking one is as follows [\ref{bib:Dacorogna}]: \textit{Let $n,N\in \mathbb N,$ and let the function $f\colon \mathbb R^{N\times n}\to\mathbb R$ be Borel measurable and locally bounded. Then $f$ is said to be quasiconvex, if}
\begin{equation}
\label{1.1}
f(\bm{\xi})\leq \int_{[0,1]^n}f(\bm{\xi}+\nabla\varphi(x))dx,
\end{equation}
\textit{for all matrices $\bm{\xi}\in\mathbb R^{N\times n}$ and all functions $\varphi\in W_0^{1,\infty}([0,1]^n,\mathbb R^N). $}
Under some appropriate growth conditions and some continuity conditions on the Lagrangian $f,$ it is known that quasiconvexity of $f$ in the gradient variable is equivalent to the fact that the energy functional
$$E(\By)=\int_{\Omega} f(x,\By(x),\nabla\By)dx$$
is weakly lower semicontinuous in an appropriate Sobolev space [\ref{bib:Morrey.1},\ref{bib:Ball},\ref{bib:Fon.Mue.},\ref{bib:Dacorogna},\ref{bib:Ben.Kru.}]; the weak lower semicontinuity of the energy $E$ in turn implies the existence of global minimizers for $E$ in the Sobolev space under consideration. The rank-one convexity condition, known to be a weaker than the quasiconvexity condition [\ref{bib:Sverak.2},\ref{bib:Sverak.1}], occurs when considering the second variation of the energy functional $E(\By).$ It reads as follows: \textit{Let $n,N\in \mathbb N$ and let $f\colon \mathbb R^{N\times n}\to\mathbb R.$ Then $f$ is said to be rank-one-convex, if
\begin{equation}
\label{1.2}
f(\lambda\BA+(1-\lambda)\BB)\leq \lambda f(\BA)+(1-\lambda)f(\BB),
\end{equation}
for all $\lambda\in [0,1]$ and $ \BA,\BB\in \mathbb R^{N\times n}$ such that $\rank(\BA-\BB)\leq 1.$} In linear elasticity a necessary condition for a body containing a linearly elastic homogeneous material with elasticity tensor $\BC$ to be stable, when the displacement is fixed at the boundary, is the rank-one convexity condition. In elasticity, when the material phase separates the displacement field (with no cracking), the displacement must still be continuous across the phase boundaries. Such phase separation is most easily seen in shape memory materials such as Nitinol. A simple geometry for the phase separated material is a laminate of the phases, and the continuity of the displacement field forces the difference of the displacement gradient in one phase minus the displacement field in the second phase to be a rank-one tensor. Thus to avoid this layering transformation the energy $f$ as a function of the displacement gradient must be rank one convex. More generally, to avoid separation at the microscale into other geometries of possibly lower energy (with affine boundary conditions on the displacement $\Bu$ at the boundary $\partial\Omega$ of the body) the energy $f(\nabla{\Bu}(\Bx))$ has to be a quasiconvex
function of $\nabla{\Bu(\Bx)}$ [\ref{bib:Ball},\ref{bib:Bal.Jam.}].

It is known that in the case when $f$ is a quadratic form, it is quasiconvex if and only if it is rank-one convex [\ref{bib:VanHove.1},\ref{bib:VanHove.2},\ref{bib:Dacorogna}],
which reduces to the so-called Legendre-Hadamard condition:
 \begin{equation}
\label{1.3}
f(\Bx\otimes\By)\geq 0, \quad \text{for all} \quad \Bx \in \mathbb R^N, \By\in\mathbb R^n,
\end{equation}
where $\Bx\otimes\By$ is the tensor product of the vectors $\Bx$ and $\By$ with $(\Bx\otimes\By)_{ij}=x_iy_j,$ for $1\leq i\leq N, 1\leq j\leq n.$
It is then clear that \textit{quasiconvex quadratic forms in applied mathematics correspond to nonnegative biquadratic forms in real algebraic geometry.} 
Let ${\cal C}_{N,n}$ denote the convex cone of $N\times n$ quasiconvex quadratic forms, where we set ${\cal C}_{n}={\cal C}_{n,n}.$ Another convexity condition in the calculus of variations is the polyconvexity condition introduced by Ball [\ref{bib:Ball}], which is known to be an intermediate condition between the standard convexity and quasiconvexity. \textit{A function $f\colon\mathbb R^{N\times n}\to\mathbb R$ is called polyconvex, if there exists a convex function $g\colon \mathbb R^K\to\mathbb R$ such that $f(\bm{\xi})=g(M_1,\dots,M_K)$ where $M_i$ are all the minors (including the first order ones) of the matrix $\bm{\xi}\in\mathbb R^{N\times n}$.} Terpstra [\ref{bib:Terpstra}] proved that in the special case when $f$ is a quadratic form, then $f$ is polyconvex if and only if it can be written as a convex quadratic form plus a linear combination of the second order minors of $\bm{\xi},$ see also [\ref{bib:Dacorogna}]. This means that \textit{polyconvex quadratic forms in applied mathematics correspond to biquadratic forms that are sums of squares in real algebraic geometry.} A characterization of symmetric polyconvexity has been recently given in [\ref{bib:Bou.Kre.Sch.}].  Also, a characterization of rank-one (quasiconvex) quadratic forms depending only on the strain is given by Zhang [\ref{bib:Zhang.1}] using Morse index. Ball showed that in the case $N=n,$ the determinant of the gradient function $\nabla \By$ is a Null-Lagrangian, and one has weak convergence of determinants $\det(\nabla\By_m)$ under the weak convergence of the fields $\{\By_m\}$ in a Sobolev space $W^{1,p} (n<p<\infty),$ thus the same classical theory of existence of global minimizers for convex Lagrangians goes through for polyconvex Lagrangians $f$ too [\ref{bib:Ball}]. There is no known algorithm that checks (analytically or even numerically) if the given function is quasiconvex or not, and it is surprisingly very complex even for simple functions $f,$ while checking the polyconvexity of a function $f$ can be straightforward in many cases. This makes polyconvexity much easier to deal with. The present work continues the line of studying extreme rays and the so-called Milton extremals (or simply weak extremals) of ${\cal C}_3,$ initiated in [\ref{bib:Har.Mil.1}] and further developed in [\ref{bib:Har.Mil.2},\ref{bib:Har.Mil.3},\ref{bib:Harutyunyan}]. Namely, we study the elements of ${\cal C}_3$ that have an extremal acoustic tensor determinant as a polynomial, and characterize them. For the convenience of the reader, we next present definitions of weak and strong extremals (extreme rays) of ${\cal C}_3,$ (for the definition of the acoustic tensor see the paragraph right before Thorem~\ref{th:1.1}). 

\begin{definition}
\label{def:1.1}
A quasiconvex quadratic form $f(\bm{\xi})\colon\mathbb R^{N\times n}\to\mathbb R$ ($f\in {\cal C}_{N,n}$) is called 
\begin{itemize}
\item[(i)] A weak (or Milton) extremal, if one can not subtract a convex form from it, other then a multiple of itself, preserving the quasiconvexity of $f.$ 
\item[(i)] An extreme ray of ${\cal C}_{N,n}$ (or a strong extremal), if one can not subtract a quasiconvex form from it, other then a multiple of itself, preserving the quasiconvexity of $f.$ 
\end{itemize}
\end{definition}

It is not difficult to prove that even the notion of weak extremality in ${\cal C}_{N,n}$ has the Krein-Milman property [\ref{bib:Milton.3}]. Extremals (weak or strong) are known to play an important role in the theory of composites as suggested by the work [\ref{bib:Milton.3}], especially when bounding effective properties of composites (such as shear or bulk moduli in elasticity for instance), in particular, the simplest forms of extremals that are the $2\times 2$ minors of $\bm{\xi}$ (which are also Null-Lagrangians), are the basis of the so-called translation method of Murat and Tartar [\ref{bib:Mur.Tar.},\ref{bib:Tartar}] or Cherkaev and Gibiansky [\ref{bib:Che.Gib.1}], see also the works [\ref{bib:Tartar},\ref{bib:Che.Gib.2},\ref{bib:All.Koh.},\ref{bib:Kan.Kim.Mil.},\ref{bib:Koh.Lip.},\ref{bib:Kan.Mil.Wan.},\ref{bib:Mil.Ngu.}] and the books [\ref{bib:Milton.1},\ref{bib:Cherkaev}]. Special forms of extremals have been used by Kang and Milton in [\ref{bib:Kan.Mil.}]
to prove bounds on the volume fractions of two materials in a three dimensional body from boundary measurements.
Extremal quasiconvex forms are also the best choice of quasiconvex functions for obtaining series expansions for effective tensors that have an extended domain of convergence, and thus analyticity properties as a function of the component moduli on this domain
(see section 14.8, and page 373 of section 18.2 of [\ref{bib:Milton.2}]).

It is easy to see that any nontrivial extremal quadratic form (different from the square of a linear form or linear combination of $2\times 2$ minors) is automatically an example of a quasiconvex quadratic form that is not polyconvex. Note, as proven by Terpstra [\ref{bib:Terpstra}], that a quadratic form is polyconvex if and only if it is the sum of a convex form and a linear combination of second order minors of the matrix $\bm{\xi}.$ It was an open question in the applied mathematics community to find an explicit example of a quadratic form that is not polyconvex, until Serre provided one [\ref{bib:Serre}] in 1981. Surprisingly such an example was already provided in linear algebra/real algebraic geometry community by Choi [\ref{bib:Choi}] six years earlier in 1975, which had not been known to the applied mathematics communities until very recent times (we believe until the year 2019). Two years later Choi and Lam provided another, even more beautiful explicit example of such a form in [\ref{bib:Cho.Lam.}]:
 \begin{equation}
\label{1.4}
f(\bm{\xi})=\xi_{11}^2+\xi_{22}^2+\xi_{33}^2+\xi_{12}^2+\xi_{23}^2+\xi_{31}^2-2(\xi_{11}\xi_{22}+\xi_{22}\xi_{33}+\xi_{33}\xi_{11}),
\end{equation}
where they prove that the new example is in fact an extreme ray (the first such explicit example) of ${\cal C}_3;$ see also [\ref{bib:Cho}]. In fact it is an open question whether weak and strong extremals of ${\cal C}_3$ are the same, while for ${\cal C}_d,$ $d\geq 4$ they are different, see next section. The first author and Milton came up with the Choi-Lam example later in [\ref{bib:Har.Mil.1}] being unaware of it (as the applied mathematics community was unaware of it) due to the lack of communication between the two communities/fields. Nonnegative biquadratic forms have been a central subject of interest in the real algebraic geometry community, such as extreme points of the convex cone ${\cal C}_d$ [\ref{bib:Choi},\ref{bib:Cho.Lam.}], separability and inseparability of positive linear maps [\ref{bib:Stormer},\ref{bib:Hou.Li.Poo.Qi.Sze.},\ref{bib:Li.Wu.}], maximal possible number of their zeros and connections with extremality [\ref{bib:Quarez.1},\ref{bib:Buc.Siv.}]. In particular the problem of expressing a nonnegative homogeneous polynomial as a sum of squares is very famous in real algebraic geometry [\ref{bib:Hilbert},\ref{bib:Artin},\ref{bib:Ble.Smi.Vel.},\ref{bib:Reznick},\ref{bib:Ble.Sin.Smi.Vel.},\ref{bib:Blekherman.1},\ref{bib:Blekherman.2},\ref{bib:Scheiderer}]. In 1888 Hilbert raised the question of whether any nonnegative polynomial over reals can be expressed as a sum of squares of rational functions, which was solved in the affirmative by Artin [\ref{bib:Artin}]. For the problem of sums of squares of polynomials we refer to the recent surveys by Blekherman and coauthors [\ref{bib:Blekherman.2},\ref{bib:Ble.Sin.Smi.Vel.}]. Another very important related problem in applied mathematics, concerning sixth order homogeneous polynomials in three variables and determinants of $\By-$matrices of quadratic forms is, whether or not any such polynomial, in particular the well known Robinson's polynomial, is a determinant of a $\By-$matrix, and it is open as well [\ref{bib:Reznick},\ref{bib:Quarez.1}]. In [\ref{bib:Buc.Siv.}] the authors construct the first examples of nonnegative biquadratic forms with a tensor in $(\mathbb R^3)^4,$ that have maximal number of nontrivial zeros, namely ten of them. Note that by our result in Theorem~\ref{th:2.1}, the latter are extreme rays of ${\cal C}_3,$ as their $\By-$matrix determinants are scalar multiples of the generalized Robinson's polynomial [\ref{bib:Buc.Siv.},\ref{bib:Reznick}], which is an extremal polynomial. 

\textit{Recall that an $2n-$homogeneous polynomial $P(\Bx)$ in the variable $\Bx=(x_1,x_2,\dots,x_m)$ is said to be an extremal polynomial, if $\mathrm{deg}(P)=2n,$ $P(\Bx)\geq 0$ for all $\Bx\in \mathbb R^m,$ and $P(\Bx)$ can not be split into the sum of two linearly independent polynomials $P_1$ and $P_2$ having the same properties.}

Some of the above results were used in [\ref{bib:Har.Mil.3}] to come up with a sufficient condition for a form 
 $f(\bm{\xi})=\bm{\xi}\BC\bm{\xi}^T\in {\cal C}_d$ to be a weak extremal, where
$\bm{\xi}\in \mathbb R^{d\times d}$ and $\BC\in (\mathbb R^d)^4.$ Namely, let a rank-one matrix
$\bm{\xi}\in \mathbb R^{d\times d}$ be given as $\bm{\xi}=\Bx\otimes\By,$ where $\Bx,\By\in\mathbb R^d,$ $d\geq 3.$ Then one can write
$f(\bm{\xi})=f(\Bx\otimes\By)=\Bx T(\By)\Bx^T,$ where $T(\By)$ is a $d\times d$ matrix, called the acoustic tensor (or just $\By-$matrix) of $f,$ with entries being quadratic forms in $\By.$ The following results have been proven in [\ref{bib:Har.Mil.3}] (we combine Theorems 3.4-3.7 in one).

\begin{theorem}
\label{th:1.2}
Let the quadratic form $f(\bm{\xi})=\bm{\xi}\BC\bm{\xi}^T,$ where $\bm{\xi}\in \mathbb R^{d\times d}$ and $\BC\in (\mathbb R^d)^4,$ $d\geq 3$ be quasiconvex. Then
\begin{itemize}
\item[(i)] If the determinant $\det(T(\By))$ is an irreducible (over the reals) extremal polynomial, then the form $f$ is a weak extremal.
\item[(ii)] Assume $d=3.$ If the determinant $\det(T(\By))$ is an extremal that is not a perfect square, then $f$ is a weak extremal.
\item[(iii)] Assume $d=3.$ If $\det(T(\By))\equiv 0$ then the form $f$ is either a weak extremal or polyconvex.
\item[(iv)] Assume $d=3.$ If the determinant $\det(T(\By))$ is a perfect square (note that this automatically implies that it is an extremal polynomial as can be seen easily), then $f$ is either a weak extremal, polyconex, or the sum of a polyconvex and a weak extremal forms, where the extremal form has identically zero acoustic tensor determinant.
\end{itemize}
\end{theorem}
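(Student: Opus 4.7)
My plan is to prove the four parts in the order (iii), (i), (ii), (iv): parts (i), (ii), (iv) each reduce, via the same determinant expansion, to the singular case (iii), which serves as the structural backbone. Throughout, I assume the split $f = f_1 + f_2$ with $f_1 \in \CC_3$ and $f_2$ a nonzero convex quadratic form (so $f_2 \equiv 0$ is the weak-extremal conclusion pursued in (i), (ii) and a polyconvex/extremal structure is pursued in (iii), (iv)). In acoustic-tensor form this reads $T(\By) = T_1(\By) + T_2(\By)$ with $T_1, T_2 \succeq 0$ for every $\By$, and convexity of $f_2$ yields a factorization $T_2(\By) = V(\By) V(\By)^T$ whose columns are vectors of the form $M_i \By$. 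The engine of the proof is the multilinear determinant expansion for symmetric $3 \times 3$ matrices,
\[
\det T = \det T_1 + \Trc(\adj(T_1) T_2) + \Trc(T_1 \adj(T_2)) + \det T_2,
\]
whose four summands are non-negative polynomials in $\By$ (the adjugate of a symmetric $3\times 3$ PSD matrix is PSD, and $\Trc$ of a product of two PSD matrices is non-negative).

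For parts (i) and (ii), extremality of the polynomial $\det T$ forces each of the four summands to be a non-negative scalar multiple of $\det T$; in particular $\det T_2 = \alpha_2 \det T$. By Cauchy--Binet, $\det T_2 = \sum_S (\det V_S)^2$, a sum of squares of degree-$3$ polynomials, so extremality of $\det T$ applied once more to this SOS representation forces every $(\det V_S)^2$ to be a scalar multiple of $\det T$. This would exhibit $\det T$ as a perfect square of a degree-$3$ polynomial, contradicting the hypothesis of (i) (irreducible, hence not a nontrivial square) or (ii) (not a perfect square). Hence $\det T_2 \equiv 0$. I would then combine this with the remaining two identities $\Trc(\adj(T_1) T_2) = \alpha_3 \det T$ and $\Trc(T_1 \adj(T_2)) = \alpha_4 \det T$, and apply (iii) to $f_2$ (which now sits inside its scope) to classify the structure of $T_2$; nondegeneracy of $\det T$ in (i), (ii) rules out any nonzero residual, giving $T_2 \equiv 0$, i.e.\ $f$ is a weak extremal.

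For part (iii), where $\det T \equiv 0$, every $T(\By)$ has rank $\leq 2$, so $\adj(T(\By))$ is PSD of rank $\leq 1$ with entries of degree $4$. The heart of the argument is to produce a polynomial vector field $\Bw(\By)$ of degree $2$ with $\adj(T(\By)) = \Bw(\By) \Bw(\By)^T$; once this is done, the identity $T(\By) \Bw(\By) \equiv 0$ translates into a linear relation on $\BC$ which, unwound, exhibits $f$ as the sum of a linear combination of $2 \times 2$ minors of $\bm{\xi}$ (a null Lagrangian) and a nonnegative quadratic form on $\bb R^{3 \times 3}$---polyconvex by Terpstra's theorem. When the extraction of such a $\Bw$ collapses (e.g.\ because the polyconvex piece would coincide with $f$ itself up to scalar), no convex form can be properly split off and $f$ is a weak extremal, yielding the stated dichotomy.

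For part (iv), when $\det T = p^2$, extremality is automatic (any nonnegative split $p^2 = q_1 + q_2$ forces $q_1, q_2$ to vanish on the real zero set of $p$ with the correct multiplicity, giving proportionality). Running the (i)/(ii) machinery now yields $\det T_2 = \alpha_2 p^2$; Cauchy--Binet plus extremality now produces the rigid factorization $\det V_S = \pm\sqrt{c_S}\, p$ instead of a contradiction, which lets me peel off from $f_2$ a polyconvex piece aligned with $p$, leaving a residual whose acoustic determinant vanishes identically and is therefore polyconvex or weak extremal by (iii)---giving the trichotomy. \textbf{Main obstacle.} The genuinely delicate step is producing the polynomial null-vector field $\Bw(\By)$ in part (iii): a PSD rank-one symmetric matrix with polynomial entries need not factor as $\Bw \Bw^T$ with $\Bw$ polynomial over $\bb R$ (e.g.\ $y_1^2 + y_2^2$ is not a square of a real polynomial). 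I expect this to require a careful case split on the generic rank of $T$ (rank $2$, rank $1$, or identically $0$), together with some real-algebraic machinery---either a Positivstellensatz-style argument, or a direct structural analysis of $3 \times 3$ PSD quadratic pencils---to guarantee that the section $\Bw$ can be chosen polynomial, not merely rational.
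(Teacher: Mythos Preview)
First, a framing point: the paper does not prove Theorem~1.2. It is quoted from the earlier work [Har.Mil.3] and then \emph{used} as a black box in the proof of Theorem~2.1. What the paper does prove is the strictly stronger Theorem~2.2 (the $\det T\equiv 0$ case), and that proof is the natural benchmark for your part~(iii).

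Your determinant-expansion opening for (i) and (ii) is correct and is essentially the mechanism in the literature: the four nonnegative summands in
\[
\det T=\det T_1+\Trc(\adj(T_1)\,T_2)+\Trc(T_1\,\adj(T_2))+\det T_2
\]
must each be scalar multiples of the extremal polynomial $\det T$, and the Cauchy--Binet step showing $\det T_2$ is a sum of squares of cubics (forcing $\det T$ to be a perfect square if $\det T_2\not\equiv 0$) is right. But your finishing move---``apply (iii) to $f_2$, then nondegeneracy of $\det T$ rules out any nonzero residual, giving $T_2\equiv 0$''---is a genuine gap. Applying (iii) to $f_2$ only tells you $f_2$ is weak extremal or polyconvex; you already know $f_2$ is convex, so this is vacuous. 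And $\det T_2\equiv 0$ by itself does \emph{not} force $T_2\equiv 0$: a nonzero convex $f_2$ with rank-deficient acoustic tensor (e.g.\ $f_2=\xi_{11}^2$) is perfectly possible in isolation. Ruling it out requires actually exploiting the two trace identities $\Trc(\adj(T_1)T_2)=\alpha_3\det T$ and $\Trc(T_1\adj(T_2))=\alpha_4\det T$ together with the zero set of $\det T$; you have not indicated how this goes, and it is not a one-line step.

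Your plan for (iii) diverges most from what actually works. The paper's proof of Theorem~2.2 never attempts to produce a global polynomial null section $\Bw(\By)$ of $T$; as you yourself flag, such a section need not exist over $\bb R$. Instead the argument runs through the cofactor matrix: since $\det T\equiv 0$ one has $\rank(\cof T)\le 1$, and a representation lemma factors $\cof(T)_{ij}=c_i(\By)d_j(\By)$ with homogeneous $c_i,d_j$. A case split on $\deg c_1\in\{0,1,2\}$ (with further sub-cases on definiteness and irreducibility when $\deg c_1=2$, including a passage through complex factorization of the diagonal entries) reduces $f$, via explicit substitutions, to a $2\times 3$ nonnegative biquadratic form, which is a sum of squares by Terpstra's theorem. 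Your proposed route through $T(\By)\Bw(\By)\equiv 0$ and an unspecified ``linear relation on $\BC$'' does not obviously connect to polyconvexity even when $\Bw$ exists, and you give no mechanism that produces the weak-extremal branch of the dichotomy. In short: the opening of (i)/(ii) is on track, but the closure of (i)/(ii) and essentially all of (iii) need a different argument than the one you sketched.
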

We improved the result in (ii) for forms having linear elastic orthotropic symmetry in [\ref{bib:Harutyunyan}], showing that in fact under the extremality and non-square condition on the determinant $\det(T(\By)),$ the form $f$ must in fact be an extreme ray of
${\cal C}_3.$ In the present manuscript we study forms $f\in {\cal C}_3$ in questions (ii)-(iv) for strong extremality, see Theorems 2.1-2.2 in the next section. We also conjecture about weak versus strong extremals of ${\cal C}_3,$ about extremality or the vanishing property of the acoustic tensor determinant versus weak or strong extremality or polyconvexity of $f(\bm{\xi})$ for $d=3$ and $d\geq 4,$ see next section.

\section{Main Results}
\setcounter{equation}{0}
\label{sec:2}
Let $d\in\mathbb N$, ($d\geq 3$) and $\BC=(C_{ijkl})\in (\mathbb R^{d})^4$ be a fourth order tensor with usual symmetries:
\begin{equation}
\label{2.1}
C_{ijkl}=C_{kjil}=C_{ilkj},\qquad 1\leq i,j,k,l\leq d.
\end{equation}
In what follows we will regard the matrix $\bm{\xi}=(\xi_{ij})\in\mathbb R^{d\times d}$ as a $d^2$-vector so that the quadratic form $f(\bm{\xi})$ will be given by
\begin{equation}
\label{2.2}
f(\bm{\xi})=\bm{\xi} \BC \bm{\xi}^T=\sum_{1\leq i,j,k,l\leq d}C_{ijkl}\xi_{ij}\xi_{kl},
\end{equation}
which will be applicable in the context of elasticity. As already noted, in the special case when $\bm{\xi}=\Bx\otimes\By$ is a rank-one matrix, where $\Bx,\By\in\mathbb R^d,$ the quadratic form $f$ reduces to
\begin{equation}
\label{2.3}
f(\Bx\otimes\By)=\Bx (\By\BC\By^T)\Bx^T=\Bx \BT(\By)\Bx^T,
\end{equation}
where $\BT(\By)=\By\BC\By^T\in \mathbb R^{d\times d}$ is the acoustic tensor (or simply the $\By-$matrix) of $f.$ Also, as mentioned above, it turns out that the determinant of $\BT(\By)$ tells quite a lot about the form $f,$ which is quite unexpected [\ref{bib:Har.Mil.3}]. We will focus on the case when $\det(\BT(\By))$ is an extremal polynomial. The following are the main results of the paper. The first theorem refers to the cases (ii) and (iv) in Thereom~\ref{th:1.2}.
\begin{theorem}
\label{th:2.1}
Let $f(\bm{\xi})=\bm{\xi} \BC \bm{\xi}^T\in {\cal C}_3,$ where $\bm{\xi}\in \mathbb R^{3\times 3}$ and $\BC\in(\mathbb R^3)^4$ is a fourth order tensor with usual symmetries as in (\ref{2.1}). Assume that the determinant of the $\By-$matrix of
$f(\Bx\otimes \By)$ is an extremal polynomial. Then one has the following:
\begin{itemize}
\item[1.] If $\det(\BT(\By))$ is not a perfect square, then $f$ must be an extreme ray of ${\cal C}_3.$
\item[2.] If $\det(\BT(\By))$ is a perfect square, then $f$ is either an extreme ray of ${\cal C}_3$ or polyconvex.
\end{itemize}
\end{theorem}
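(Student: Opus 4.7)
The plan is to upgrade the conclusions of Theorem~\ref{th:1.2}(ii) and (iv) from \emph{weak} extremality to \emph{strong} extremality, leveraging the $3\times 3$ polynomial identity for the determinant of a sum together with the extremality of $\det\BT_f(\By)$; the ``polyconvex'' alternative in Part~2 inherits directly from Theorem~\ref{th:1.2}(iv), with its third alternative (``sum of a polyconvex form and a weak extremal with zero acoustic tensor determinant'') absorbed into the polyconvex alternative via the strengthened zero-determinant result announced in the abstract. Thus the main content is the implication: \emph{weak extremal with extremal $\det\BT_f$ $\Rightarrow$ extreme ray of ${\cal C}_3$}.

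Suppose $f=g+h$ with $g,h\in {\cal C}_3$. The acoustic tensors satisfy $\BT_f(\By)=\BT_g(\By)+\BT_h(\By)$ pointwise, and all three matrices are PSD. The aim is to show $\BT_g(\By)$ is a constant scalar multiple of $\BT_f(\By)$ as a matrix-valued polynomial, which forces $g$ to be a scalar multiple of $f$ modulo null Lagrangians (the $2\times 2$ minors). The key identity is
\[
\det(\BT_g+\BT_h)=\det\BT_g+\Trc\!\bigl(\adj(\BT_g)\,\BT_h\bigr)+\Trc\!\bigl(\BT_g\,\adj(\BT_h)\bigr)+\det\BT_h,
\]
whose four summands are nonnegative polynomials of degree $6$ in $\By$, since adjugates of PSD matrices are PSD and traces of products of PSD matrices are nonnegative. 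Because $\det\BT_f$ is extremal, each of these summands must be a nonnegative multiple of $\det\BT_f$; in particular $\det\BT_g=\mu\det\BT_f$ for some $\mu\in[0,1]$. The proof now splits on $\mu$.

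If $\mu=0$, then $\det\BT_g\equiv 0$. The strengthened zero-determinant theorem then gives $g=g_c+M$ with $g_c$ convex and $M$ a linear combination of $2\times 2$ minors. Consequently $f-g_c=h+M$ is quasiconvex (null Lagrangians preserve quasiconvexity), and the weak extremality of $f$ (Theorem~\ref{th:1.2}(ii)/(iv)) forces $g_c=\lambda f$; the extremality of $\det\BT_f$ precludes $f$ from being convex (save the trivial $f\equiv 0$), so $g_c=0$ and $g$ reduces to the null Lagrangian $M$, which is the trivial proportionality case in ${\cal C}_3$.

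The main obstacle is the case $\mu>0$, where one must promote the scalar proportionality $\det\BT_g=\mu\det\BT_f$ to the matrix proportionality $\BT_g=\mu^{1/3}\BT_f$. Two ingredients are combined. First, at every real zero $\By_0$ of $\det\BT_f$, the chain $0\leq g(\Bx\otimes\By_0)\leq f(\Bx\otimes\By_0)=0$ for $\Bx\in\ker\BT_f(\By_0)$, together with PSDness of $\BT_g(\By_0)$, yields the pointwise kernel inclusion $\ker\BT_f(\By_0)\subseteq\ker\BT_g(\By_0)$; extremality of $\det\BT_f$ guarantees a sufficiently rich zero set to make this a stringent algebraic constraint on $\BT_g$. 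Second, the pointwise AM--GM bounds $\Trc(\adj(\BT_g)\,\BT_h)\geq 3(\det\BT_g)^{2/3}(\det\BT_h)^{1/3}$ and its symmetric companion, combined with the extremality-imposed identity $\mu_1+\nu_1+\nu_2+\mu_2=1$ on the four multipliers produced above, are used to saturate the pointwise Minkowski determinant inequality for $3\times 3$ PSD matrices, which in turn forces $\BT_g(\By)$ and $\BT_h(\By)$ to be pointwise proportional; constancy of the proportionality factor is then read off from $\det\BT_g/\det\BT_f\equiv\mu$, yielding $\BT_g=\mu^{1/3}\BT_f$. The non-perfect-square hypothesis in Part~1 is what excludes rank-deficient deviations of $\BT_g$ from $\mu^{1/3}\BT_f$ that would otherwise be consistent with the algebraic identities; in Part~2 (perfect-square case), those deviations correspond precisely to the polyconvex alternative in the stated dichotomy.
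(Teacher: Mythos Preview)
Your setup and the $\mu=0$ case are essentially sound and parallel the paper's treatment of the case $\alpha=0$. The genuine gap is in the $\mu>0$ case: the Minkowski/AM--GM step does \emph{not} force pointwise proportionality of $\BT_g$ and $\BT_h$. From the scalar relations $\mu_1+\nu_1+\nu_2+\mu_2=1$, $\nu_1\ge 3\mu_1^{2/3}\mu_2^{1/3}$, $\nu_2\ge 3\mu_1^{1/3}\mu_2^{2/3}$ you can only conclude $(\mu_1^{1/3}+\mu_2^{1/3})^3\le 1$, which is Minkowski's inequality itself---nothing forces equality. Concretely, at a point where $\BT_f(\By)$ is positive definite, work in a basis with $\BT_f(\By)=\BI$ and take $\BT_g(\By)=\mathrm{diag}(\tfrac12,\tfrac12,\tfrac18)$, $\BT_h(\By)=\mathrm{diag}(\tfrac12,\tfrac12,\tfrac78)$: then $\mu_1=\tfrac{1}{32}$, $\mu_2=\tfrac{7}{32}$, $\nu_1+\nu_2=\tfrac34$, the AM--GM lower bounds sum to roughly $0.52<0.75$, and $\BT_g,\BT_h$ are manifestly not proportional. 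Extremality of $\det\BT_f$ pins down only the four \emph{scalar} multipliers, not the pointwise matrix structure, so nothing in your argument excludes this configuration. The ``kernel inclusion at zeros of $\det\BT_f$'' ingredient is too vague to close the gap; an extremal sextic need not have a zero set rich enough to determine a $3\times 3$ matrix of quadratic forms, and you do not say how the constraint is actually used.

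The paper's route is substantially different and does not attempt a Minkowski-type shortcut. It considers the full pencil $\det(\BT_f-\lambda\BT^1)$ and uses extremality of $\det\BT_f$ (together with Lemma~\ref{lem:4.1}) to factor it as $\varphi(\lambda)\det\BT_f(\By)$ with $\varphi$ a real cubic in $\lambda$ \emph{independent of $\By$}. Hence the three generalized eigenvalues $1\le\lambda_1\le\lambda_2\le\lambda_3$ of the pair $(\BT_f(\By),\BT^1(\By))$ are constants. If $\lambda_1=\lambda_2=\lambda_3$ one reads off $\BT^1=\lambda_1^{-1}\BT_f$ immediately. Otherwise one sets $\BS=\BT_f-\lambda_1\BT^1$, so $\det\BS\equiv 0$, but $\BS$ is \emph{not} a priori semidefinite, so Theorem~\ref{th:2.2} does not apply directly. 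The core of the proof is a careful analysis of $\BS$: one first shows the diagonal cofactors $\cof(\BS)_{ii}\ge 0$, and then splits into (a) some $s_{ii}$ semidefinite, whence Sylvester forces $\BS$ semidefinite and Theorem~\ref{th:2.2} yields a contradiction with weak extremality (or the polyconvex alternative in Part~2); and (b) all $s_{ii}$ indefinite, where Marcellini's theorem on pairs of quadratic forms forces $\BS(\By)=s_{11}(\By)\,\BA$ for a constant singular semidefinite matrix $\BA$, and a sign argument against the identity for $\sum s_{ij}\cof(\BT^1)_{ij}$ gives the contradiction. Your proposal bypasses this entire mechanism, and the AM--GM shortcut is not an adequate substitute.
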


The next theorem refers to the case (iii) in Thereom~\ref{th:1.2}. It will also be a major factor in the proof of the main Theorem 2.1.

\begin{theorem}
\label{th:2.2}
Let $f(\bm{\xi})=\bm{\xi} \BC \bm{\xi}^T\in {\cal C}_3,$ where $\bm{\xi}\in \mathbb R^{3\times 3}$ and $\BC\in(\mathbb R^3)^4$ is a fourth order tensor with usual symmetries as in (\ref{2.1}). Assume that the determinant of the $\By-$matrix of $f(\Bx\otimes \By)$ is identically zero. Then $f$ must be a polyconvex form.
\end{theorem}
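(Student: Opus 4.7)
The plan is to translate polyconvexity into a sum-of-squares statement for the associated biquadratic form. By Terpstra's classical characterization, a quadratic form $f$ on $\mathbb R^{3\times 3}$ is polyconvex iff $f(\bm\xi)=g(\bm\xi)+L(\bm\xi)$ with $g$ convex quadratic and $L$ a linear combination of the nine $2\times 2$ minors of $\bm\xi$. Since those minors span precisely the quadratic forms vanishing on rank-one tensors, and $f(\Bx\otimes\By)=\Bx^T\BT(\By)\Bx$, polyconvexity is equivalent to the biquadratic identity
\[
\Bx^T\BT(\By)\Bx = \sum_k (\Bx^T A_k \By)^2
\]
for some constant $3\times 3$ matrices $A_k$, after which $f(\bm\xi)-\sum_k(\mathrm{tr}(A_k^T\bm\xi))^2$ vanishes on rank-ones and is automatically a combination of $2\times 2$ minors. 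Quasiconvexity forces $\BT(\By)$ to be positive semidefinite, and the hypothesis $\det\BT(\By)\equiv 0$ restricts its rank to at most two at every $\By$.

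I would stratify by the generic rank $r\in\{0,1,2\}$ of $\BT(\By)$. The case $r=0$ is immediate: $f$ vanishes on rank-one tensors and is itself a combination of $2\times 2$ minors. If $r=1$, then $\BT(\By)$ is PSD of matrix-rank at most one with degree-two entries in $\By$; this forces each diagonal entry to be the square of a linear form in $\By$ and fixes the off-diagonals up to sign, yielding a factorization $\BT(\By)=(V\By)(V\By)^T$ for a constant matrix $V\in\mathbb R^{3\times 3}$, so that $\Bx^T\BT(\By)\Bx=(\Bx^T V\By)^2$ is a single square.

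The main case is $r=2$, which I would split according to whether $K_0:=\bigcap_\By \ker\BT(\By)$ contains a nonzero vector. In the common-kernel sub-case, after a change of basis sending a unit $\Bn\in K_0$ to $\Be_3$, the identity $\BT(\By)\Be_3\equiv 0$ together with the symmetries $C_{ijkl}=C_{kjil}=C_{ilkj}$ forces $C_{3jkl}=C_{ij3l}=0$ for all indices, so $f(\bm\xi)$ depends only on the first two rows of $\bm\xi$ and restricts to a quasiconvex quadratic form on $\mathbb R^{2\times 3}$. The classical theorem (Terpstra, Dacorogna) that every quasiconvex quadratic form on $m\times n$ matrices with $\min(m,n)\le 2$ is polyconvex then supplies the required decomposition, which lifts trivially back to $\mathbb R^{3\times 3}$.

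The delicate sub-case, $r=2$ with $K_0=\{0\}$, is the main obstacle. Here I would invoke Theorem~\ref{th:1.2}(iii), which already narrows the alternatives to ``$f$ polyconvex or $f$ weak extremal,'' reducing the task to ruling out a non-polyconvex weak extremal. My plan is to extract a polynomial kernel vector field $\Bn(\By)$ as a real polynomial square root of $\mathrm{adj}\,\BT(\By)$, which is PSD of matrix-rank at most one with degree-four entries, and then, using $\Bn(\By)$ together with the symmetries of $\BC$, construct explicitly a nonzero convex quadratic form $h$, not proportional to $f$, such that $f-h$ remains quasiconvex; this contradicts weak extremality and forces $f$ to be polyconvex. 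The hardest part will be verifying the existence of the polynomial square root in this varying-kernel regime and checking that the resulting correction $h$ actually lies strictly below $f$ in $\mathcal C_3$, which will require combining the rank stratification with careful use of the acoustic tensor's symmetry structure.
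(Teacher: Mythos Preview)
Your stratification by generic rank and the reductions in the easy cases are sound, and the common-kernel sub-case of $r=2$ is correctly handled: after your change of basis the identities $C_{ij3l}=0$, $C_{3jkl}=0$ do follow from the symmetries, and $f$ becomes a $2\times 3$ form to which Terpstra applies. (This matches the paper's quick disposal of the situation where a diagonal entry of $\BT$ vanishes identically.) One minor inaccuracy: in the $r=1$ case, the factorization $\BT(\By)=(V\By)(V\By)^T$ for a constant $V$ is not always available---if $t_{11}(\By)$ is an irreducible nonnegative quadratic such as $y_1^2+y_2^2$, you instead get $\BT(\By)=P(\By)\,aa^T$ for a constant vector $a$, which is a sum of two or three squares rather than one. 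The polyconvexity conclusion survives, so this is cosmetic.

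The genuine gap is the main sub-case $r=2$, $K_0=\{0\}$. There you essentially restate the goal: you will ``construct explicitly a nonzero convex quadratic form $h$, not proportional to $f$, such that $f-h$ remains quasiconvex,'' but you give no construction. Possessing a polynomial kernel direction $\Bn(\By)$ does not by itself tell you which square to peel off, and in fact the entries of $\mathrm{adj}\,\BT(\By)$ are degree-four polynomials whose diagonal need not be perfect squares, so a degree-two polynomial square root need not exist in the form you suggest. This sub-case is exactly the heart of the theorem, and the paper spends almost all of its effort on it. What the paper does is factor the rank-one cofactor matrix as $\cof(\BT)_{ij}=c_i(\By)d_j(\By)$ via a polynomial rank-one factorization lemma, and then run a case analysis on $\deg c_i\in\{0,1,2\}$. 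The degree-$0$ and degree-$1$ cases reduce, after explicit linear substitutions in $\Bx$, to genuine $2\times 3$ nonnegative biquadratic forms where Terpstra gives the squares directly; a divisibility argument then reconstructs the third coordinate. The degree-$2$ case splits further (indefinite versus semidefinite $c_3$; reducible versus irreducible), and in the hardest irreducible semidefinite branch the argument passes through a complex factorization $(t_{12}+ic_3)(t_{12}-ic_3)=t_{11}t_{22}$ and unique factorization over $\mathbb C[\By]$ to pin down $t_{ij}=u_iu_j\pm u_i'u_j'$ explicitly, after which the sum-of-squares decomposition is written out by hand. None of this machinery is present in your plan, and without it the $K_0=\{0\}$ case---which carries essentially all the content---remains unresolved.
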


Several remarks are in order.

\begin{remark}[The case $d=3$]
\label{re:2.3}
Let $d=3.$ Note first that the Choi-Lam example in (\ref{1.4}) gives
$$\det{\BT(\By)}=y_1^4y_2^2+y_2^4y_3^2+y_3^4y_1^2-3y_1^2y_2^2y_3^2,$$
which is known to be an extremal polynomial; this falls into Theorem~2.1. An example of a polyconvex $f$ that has a perfect square or zero determinant would be $f(\bm{\xi})=\sum_{i=1}^3\xi_{ii}^2$ or $f(\bm{\xi})=\xi_{11}^2.$ However, we are not
aware of an example of an $f$ that is non-polyconvex, is an extreme ray of ${\cal C}_3$ such that $\det(\BT(\By))$ is a perfect square.
We believe that if $f\in {\cal C}_3$ with $\det{\BT(\By)}$ being a perfect square, then $f$ must in fact be polyconvex. However, at the moment we have no proof for the statement.
\end{remark}

\begin{remark}[The case $d\geq 4$]
\label{re:2.4}
Note that if one only assumes that $\det{\BT(\By)}$ is en extremal polynomial (not necessarily irreducible), then $f(\bm{\xi})$ has to be neither a weak extremal of ${\cal C}_d$ nor polyconvex. A counterexample would be
$$f(\bm{\xi})=\xi_{11}^2+\xi_{22}^2+\xi_{33}^2+\xi_{12}^2+\xi_{23}^2+\xi_{31}^2-2(\xi_{11}\xi_{22}+\xi_{22}\xi_{33}+\xi_{33}\xi_{11})\
+\sum_{k=3}^d\xi_{kk}^2,$$
which has the acoustic tensor determinant
$$\det{\BT(\By)}=(y_1^4y_2^2+y_2^4y_3^2+y_3^4y_1^2-3y_1^2y_2^2y_3^2)\prod_{k=3}^dy_{k}^2,$$
which is clearly an extremal polynomial. However, obviously $f$ is neither a weak extremal nor polyconvex.
\end{remark}

\begin{remark}[The case $d\geq 4$]
\label{re:2.5}
Another thing to note is that in the case $d\geq 4,$ one can put together two copies of the Choi-Lam form to achieve an example
$f\in {\cal C}_d$ that is a weak but not a strong extremal of ${\cal C}_d.$ Namely, it is easy to see that the form
\begin{align*}
f(\bm{\xi})&=\xi_{11}^2+\xi_{22}^2+\xi_{33}^2+\xi_{12}^2+\xi_{23}^2+\xi_{31}^2-2(\xi_{11}\xi_{22}+\xi_{22}\xi_{33}+\xi_{33}\xi_{11})\\
&+\xi_{22}^2+\xi_{33}^2+\xi_{44}^2+\xi_{23}^2+\xi_{34}^2+\xi_{41}^2-2(\xi_{22}\xi_{33}+\xi_{33}\xi_{44}+\xi_{44}\xi_{22})
\end{align*}
is a weak extremal of ${\cal C}_d.$ This implies that weak and strong extremals of ${\cal C}_d$ are in general different for $d\geq 4.$
\end{remark}

\begin{remark}[The case $d\geq 4$]
\label{re:2.6}
Taking again the Choi-Lam example $f(\bm{\xi})$ we have $\det{\BT(\By)}\equiv 0$ for $d\geq 4.$ This shows that Theorem~2.2 fails for $d\geq 4.$
\end{remark}

Finally, we make following conjecture.
\begin{conjecture}[The case $d=3$]
\label{con:2.7}
Any non-polyconvex weak extremal $f\in {\cal C}_3$ is an extreme ray of ${\cal C}_3.$ Moreover, if $f\in {\cal C}_3$ is a
non-polyconvex extreme ray of ${\cal C}_3,$ then $\det{\BT(\By)}$ is en extremal polynomial different from a perfect square.
\end{conjecture}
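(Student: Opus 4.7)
The plan is to deduce Conjecture~\ref{con:2.7} from Theorems~\ref{th:2.1} and~\ref{th:2.2} on a case-by-case basis according to the shape of $D(\By):=\det\BT(\By)$, and then to identify precisely the residual scenarios that require genuinely new arguments.

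For the first assertion, let $f\in\mathcal{C}_3$ be a non-polyconvex weak extremal. If $D\equiv 0$, Theorem~\ref{th:2.2} gives polyconvexity, contradicting the hypothesis. If $D$ is an extremal polynomial, then Theorem~\ref{th:2.1} together with non-polyconvexity of $f$ forces $f$ to be an extreme ray of $\mathcal{C}_3$ regardless of whether $D$ is a perfect square. The only unresolved subcase is $D\not\equiv 0$ and $D$ non-extremal, where a proof must construct a quasiconvex decomposition of $f$ contradicting its weak extremality. For the second assertion, a non-polyconvex extreme ray is automatically a non-polyconvex weak extremal, so by the above $D\not\equiv 0$ and $D$ is extremal; what remains is to rule out that $D$ is a perfect square.

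To handle the non-extremal-$D$ obstruction, I would try to promote a splitting $D=P_1+P_2$ into linearly independent nonnegative sextics to a genuine splitting of $f$. Since $f\mapsto D$ is nonlinear, I would proceed infinitesimally, seeking a quadratic map $\BS(\By)$ into symmetric $3\times 3$ matrices with $\BS(\By)\succeq 0$ and $\BT(\By)-\BS(\By)\succeq 0$ for every $\By$, with the corresponding $g\in\mathcal{C}_3$ not a multiple of $f$. The first-order identity
\begin{equation*}
\det(\BT-t\BS)=D-t\,\Trc(\adj(\BT)\,\BS)+O(t^2),
\end{equation*}
combined with a Positivstellensatz/real Nullstellensatz description of nonnegative sextics vanishing on the real zero locus of $D$, should translate the splitting of $D$ into an existence statement for $\BS$. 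The main obstacle is precisely this step: the notorious gap between nonnegative and sum-of-squares $3\times 3$ biquadratic forms suggests the argument likely requires additional structural input about the real zero scheme of $D$, or must proceed by contradiction, extracting rigidity from the hypothetical absence of such $\BS$.

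For the perfect-square case $D=p^2$ the task aligns with Remark~\ref{re:2.3}, namely to prove that $f$ is polyconvex. The plan is to exploit that $\BT(\By)$ is singular along the cubic $Z=\{p=0\}$ and that $D$ vanishes there to even order, which should force $\ker\BT(\By)$ to extend continuously across $Z$ with bounded order of degeneracy. If this kernel admits a global polynomial section $\Bv(\By)$ with $\BT(\By)\Bv(\By)\equiv 0$, then $\BT(\By)$ factors as $\BA(\By)\BA(\By)^T$ on the orthogonal complement of $\Bv(\By)$, and expanding $f(\Bx\otimes\By)=\Bx\BT(\By)\Bx^T$ exhibits $f$ as a sum of squares of bilinear forms plus $2\times 2$ minor terms of $\bm{\xi}$, giving polyconvexity by Terpstra's theorem. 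The delicate step is the global polynomial lifting of the kernel field, which is sensitive to the singularity type of $Z$ (smooth cubic, nodal cubic, or reducible) and is, I expect, the principal reason the conjecture has resisted proof.
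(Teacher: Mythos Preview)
The statement you are attempting to prove is labeled a \emph{Conjecture} in the paper, and the paper supplies no proof whatsoever---only a brief paragraph of motivation immediately following the statement, pointing to the open question of whether every nonnegative sextic in three variables arises as $\det\BT(\By)$ for some $f\in\mathcal{C}_3$. There is therefore no ``paper's own proof'' to compare against.

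Your reduction is accurate and useful: you correctly observe that Theorems~\ref{th:2.1} and~\ref{th:2.2} dispose of the cases $D\equiv 0$ and $D$ extremal, leaving exactly two genuine obstructions---(a) $D$ nonzero and non-extremal for the first assertion, and (b) $D$ a nonzero perfect square for the second. You are also honest that neither of your proposed attacks is complete. That honesty is warranted. For (a), your infinitesimal strategy of lifting a splitting $D=P_1+P_2$ to a matrix splitting $\BT=\BS+(\BT-\BS)$ via $\Trc(\adj(\BT)\,\BS)$ runs directly into the difficulty that the map $f\mapsto D$ is cubic, not linear, in $\BT$; a first-order identity alone cannot produce a global $\BS$ with $0\preceq\BS\preceq\BT$, and the Positivstellensatz invocation is vague. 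For (b), the global polynomial kernel section you seek need not exist: even when $\BT(\By)$ is singular along a smooth cubic, the one-dimensional kernel bundle over $\{p=0\}$ may have nontrivial twist, obstructing a polynomial lift, and the paper's Remark~\ref{re:2.3} explicitly records that the authors have no proof and no counterexample here.

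In short, your proposal is not a proof but a correct diagnosis of what is known and what is open, together with plausible but incomplete heuristics for the open cases. That is an appropriate response to a conjecture, but it should not be presented as a proof.
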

The motivation behind this conjecture is the yet unproven fact that any nonnegative sixth degree homogeneous polynomial $P(\By)$ in three variables 
($\By\in\mathbb R^3$) is necessarily the determinant of the acoustic tensor $T(\By)$ of an element $f\in {\cal C}_3,$ e.g.,
[\ref{bib:Buc.Siv.},\ref{bib:Reznick},\ref{bib:Quarez.1}]. A weaker statement, that every real multivariate polynomial has a symmetric determinantal representation is 
known to be true, and was recently proven by Helton, McCullough, and Vinnikov [\ref{bib:Hel.McC.Vin.}], see also [\ref{bib:Quarez.2},\ref{bib:Ste.Wel.}].

\section{Proof of Theorem~2.1}
\label{sec:3}
\setcounter{equation}{0}

\begin{proof}[Proof of Theorem~\ref{th:2.1}] We will be utilizing Theorem~2.2 in the proof here; the proof of which is postponed until Section~4.
We will be carrying out some steps applicable to both cases in Theorem~\ref{2.1}, and at the same time considering each case separately if necessary. Assume in contradiction that $f$ is not an extreme ray of ${\cal C}_3,$ thus there exists a form $f_1\in{\cal C}_3$ such that $f_1$ and $f$ are linearly independent satisfying the inequalities
\begin{equation}
\label{4.1}
0\leq f_1(\Bx\otimes\By)\leq f(\Bx\otimes\By),\quad\text{for all}\quad \Bx,\By\in\mathbb R^3.
\end{equation}
We will prove that in the first case this is not possible, while in the second case this leads to the conclusion that $f$ is polyconvex.
Denote $f(\Bx\otimes\By)=\Bx \BT(\By)\Bx^T,$ $f_1(\Bx\otimes\By)=\Bx \BT^1(\By)\Bx^T,$ $\BT(\By)=(t_{ij}(\By))_{i,j=1}^3,$ and $\BT^1(\By)=(t_{ij}^1(\By))_{i,j=1}^3.$ Consider the determinant $\det(\BT(\By)-\lambda \BT^1(\By))$ as a polynomial
in $\lambda\in\mathbb R:$
\begin{align}
\label{4.2}
P(\lambda)&=\det(\BT(\By)-\lambda \BT^1(\By))\\ \nonumber
&=\det(\BT(\By))-\lambda\sum_{i,j=1}^3t^1_{ij}(\By)\cof(\BT(\By))_{ij}+\lambda^2\sum_{i,j=1}^3t_{ij}(\By)\cof(\BT^1(\By))_{ij}-\lambda^3\det(\BT^1(\By)),
\end{align}
which will be a key factor in the analysis. The determinant above gives rise to the coefficients of $\lambda^k,$ for $k=0,1,2,3$ that are homogeneous polynomials of $\By$ of degree six, which turn out to have to satisfy certain monotonicity properties proven in
[Lemma~4.1, \ref{bib:Harutyunyan}] and given in the lemma below.
\begin{lemma}
\label{lem:4.1}
Let $n\in\mathbb N$ satisfy $n\geq 2$ and let $\BA,\BB\in \mathbb M_{sym}^{n\times n}$ be symmetric positive semi-definite matrices such that $\BA\geq \BB$ in the sense of quadratic forms. Then for any integers $1\leq k< m\leq n$ one has the inequality
\begin{equation}
\label{4.3}
\frac{1}{{n \choose m}}\sum_{M_m(\BB)}M_m(\BB)\cof_{\BA}(M_m(\BB))\leq \frac{1}{{n \choose k}}\sum_{M_k(\BB)}M_k(\BB)\cof_{\BA}(M_k(\BB)),
\end{equation}
where the number ${n \choose m}$ is the binomial coefficient, and the sum $\sum_{M_m(\BB)}$ is taken over all $m-$th order minors $M_m(\BB)$ of $\BB,$ and $\cof_{\BA}(M_m(\BB))$ denotes the cofactor of the minor in the matrix $\BA,$ obtained by choosing the same rows and columns as to get the minor $M_m(\BB)$ in $\BB.$
\end{lemma}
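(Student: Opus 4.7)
The plan is to recognize each of the sums appearing in (\ref{4.3}) as the coefficient (up to sign) of a power of $\lambda$ in the polynomial $P(\lambda):=\det(\BA-\lambda\BB)$, and then to reduce the desired inequality to a Newton/Maclaurin-type monotonicity for elementary symmetric functions in the generalized eigenvalues of the pencil $(\BA,\BB)$.

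First, by multilinearity of the determinant in rows followed by a Laplace expansion along the rows taken from $-\lambda\BB$, one obtains
\begin{equation*}
\det(\BA-\lambda\BB)=\sum_{k=0}^{n}(-\lambda)^{k}\sum_{|S|=|T|=k}\det(\BB[S,T])\,\cof_{\BA}(S,T),
\end{equation*}
where $\cof_{\BA}(S,T):=(-1)^{\sum_{i\in S}i+\sum_{j\in T}j}\det(\BA[S^{c},T^{c}])$. The inner double sum is exactly $C_{k}:=\sum_{M_{k}(\BB)}M_{k}(\BB)\,\cof_{\BA}(M_{k}(\BB))$, so (\ref{4.3}) amounts to showing $C_{m}/{n \choose m}\leq C_{k}/{n \choose k}$ for $k<m$.

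Next, after the continuity-preserving substitution $\BA\mapsto\BA+\varepsilon I$, $\BB\mapsto\BB+\varepsilon I$ (which keeps $\BA\geq\BB$), I may assume $\BB$ is strictly positive definite and simultaneously diagonalize the pencil: there exists invertible $\BQ$ with $\BQ^{T}\BB\BQ=I$ and $\BQ^{T}\BA\BQ=\mathrm{diag}(\mu_{1},\dots,\mu_{n})$, where each $\mu_{i}\geq 1$ because $\BA\geq\BB$. A direct computation then gives
\begin{equation*}
\det(\BA-\lambda\BB)=\det(\BB)\prod_{i=1}^{n}(\mu_{i}-\lambda)=\det(\BB)\sum_{k=0}^{n}(-\lambda)^{k}e_{n-k}(\mu),
\end{equation*}
so $C_{k}=\det(\BB)\,e_{n-k}(\mu)$, and dividing through by $\det(\BB)>0$ reduces (\ref{4.3}) to the purely symmetric-function claim that $j\mapsto e_{j}(\mu)/{n \choose j}$ is nondecreasing whenever $\mu_{1},\dots,\mu_{n}\geq 1$.

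To finish, I would set $\mu_{i}=1+a_{i}$ with $a_{i}\geq 0$; regrouping the terms of $e_{j}(\mu)$ by the subset on which the factor $a_{i}$ (rather than $1$) is chosen yields
\begin{equation*}
\frac{e_{j}(\mu)}{{n \choose j}}=\sum_{l=0}^{j}\frac{{j \choose l}}{{n \choose l}}\,e_{l}(a_{1},\dots,a_{n}).
\end{equation*}
Each coefficient ${j\choose l}/{n\choose l}$ is nondecreasing in $j$ for fixed $l$, and each $e_{l}(a)\geq 0$, so the right-hand side is nondecreasing in $j$; letting $\varepsilon\to 0^{+}$ recovers the degenerate case. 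The step I expect to be most tedious is the bookkeeping of the Laplace signs in the first display, making sure the cofactor $\cof_{\BA}(S,T)$ is tied to the \emph{same} row and column indices as the minor $\det(\BB[S,T])$; everything else is mechanical once the minor-cofactor sums are identified with the coefficients of $\det(\BA-\lambda\BB)$.
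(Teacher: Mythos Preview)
The paper does not actually prove Lemma~\ref{lem:4.1}; it merely quotes the statement and cites the proof from [\ref{bib:Harutyunyan}], so there is no in-paper argument to compare against. Judged on its own, your argument is correct: the identification of $C_{k}:=\sum_{M_{k}(\BB)}M_{k}(\BB)\cof_{\BA}(M_{k}(\BB))$ with the coefficient of $(-\lambda)^{k}$ in $\det(\BA-\lambda\BB)$ via row-multilinearity and a Laplace expansion along the chosen rows is valid (and is consistent with the paper's explicit use in (\ref{4.2})--(\ref{4.4})), the perturbation $\BA\mapsto\BA+\varepsilon I$, $\BB\mapsto\BB+\varepsilon I$ preserves $\BA\geq\BB$ and allows simultaneous diagonalization with generalized eigenvalues $\mu_{i}\geq 1$, and the resulting identity $C_{k}=\det(\BB)\,e_{n-k}(\mu)$ reduces (\ref{4.3}) to the monotonicity of $j\mapsto e_{j}(\mu)/\binom{n}{j}$ for $\mu_{i}\geq 1$. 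Your combinatorial proof of that monotonicity via $\mu_{i}=1+a_{i}$ and
\[
\frac{e_{j}(\mu)}{\binom{n}{j}}=\sum_{l=0}^{j}\frac{\binom{j}{l}}{\binom{n}{l}}\,e_{l}(a),\qquad a_{i}\geq 0,
\]
is clean; the coefficient $\binom{j}{l}/\binom{n}{l}=\prod_{r=0}^{l-1}\frac{j-r}{n-r}$ is indeed nondecreasing in $j$, and the limit $\varepsilon\to 0^{+}$ is harmless since each $C_{k}$ is a polynomial in the matrix entries. The only cosmetic point is the one you already flag: in the first display the Laplace sign bookkeeping must tie $\cof_{\BA}(S,T)$ to the same row/column index sets as $\det(\BB[S,T])$, which is exactly how the lemma's ``cofactor of the minor in $\BA$'' is defined.
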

Due to (\ref{4.1}), we have $\BT(\By)\geq \BT^1(\By)$ for all $\By\in\mathbb R^3$ in the sense of quadratic forms, thus Lemma~\ref{lem:4.1} implies the inequalities
\begin{equation}
\label{4.4}
0\leq 3\det(\BT^1(\By))\leq \sum_{i,j=1}^3t_{ij}(\By)\cof(\BT^1(\By))_{ij}\leq \sum_{i,j=1}^3t^1_{ij}(\By)\cof(\BT(\By))_{ij}\leq 3\det(\BT(\By)),\ \ \  \By\in\mathbb R^3.
\end{equation}
Hence the polynomials $3\det(\BT^1(\By))$, $\sum_{i,j=1}^3t_{ij}(\By)\cof(\BT^1(\By))_{ij},$ $\sum_{i,j=1}^3t^1_{ij}(\By)\cof(\BT(\By))_{ij},$ being in between zero and the extremal polynomial $3\det(\BT(\By))$ must be scalar multiples of $\det(\BT(\By)),$ i.e., we have
\begin{align}
\label{4.5}
\det(\BT^1(\By))&=\alpha \det(\BT(\By)),\\ \nonumber
\sum_{i,j=1}^3t_{ij}(\By)\cof(\BT^1(\By))_{ij}&=\beta\det(\BT(\By)),\\ \nonumber
\sum_{i,j=1}^3t^1_{ij}(\By)\cof(\BT(\By))_{ij}&=\gamma \det(\BT(\By)),\\ \nonumber
\text{for some}\quad \alpha, \beta, \gamma &\geq 0.
\end{align}
Consequently we get from (\ref{4.2}) and (\ref{4.5}) the key identity
\begin{equation}
\label{4.6}
\det(\BT(\By)-\lambda \BT^1(\By))=(1-\gamma\lambda+\beta\lambda^2-\alpha\lambda^3)\det(\BT(\By))=\varphi(\lambda)\det(\BT(\By)),\ \ \
\By\in \mathbb R^3,\lambda\in\mathbb R.
\end{equation}
In the next step we note that the polynomial $\varphi$ does not have roots in $(-\infty,1)$, more precisely
\begin{equation}
\label{4.7}
\varphi(\lambda)>0,\quad\text{ for}\quad \lambda\in(-\infty,1).
\end{equation}
Indeed, for $\lambda\leq 0$ we have by the conditions $\alpha,\beta,\gamma \geq 0$ that $\varphi(\lambda)\geq 1.$
Choosing a point $\By^0\in\mathbb R^3$ such that $\det(\BT(\By^0))>0$, we have for any $\lambda\in(0,1)$ by Lemma~\ref{lem:4.1} that
\begin{align*}
\varphi(\lambda)&=\frac{1}{\det(\BT(\By^0))}\det(\BT(\By^0)-\lambda \BT^1(\By^0))\\
&=\frac{1}{\det(\BT(\By^0))}\det[(1-\lambda)\BT(\By^0)+\lambda(\BT(\By^0)-\BT^1(\By^0))]\\
&\geq (1-\lambda)^3,
\end{align*}
as $\BT(\By^0)\geq \BT^1(\By^0)$ in the sense of quadratic forms. Note also that the equality $\alpha=0$ is impossible as it would mean by (\ref{4.5}) that
$$\det(\BT^1(\By))=\alpha \det(\BT(\By))=0,\qquad \By\in\mathbb R^3,$$
i.e., the quasiconvex form $f^1$ has an identically zero acoustic tensor determinant, thus by Theorem~\ref{th:2.2} it must be polyconvex. Invoking again the characterization theorem for polyconvex quadratic forms by Terpstra [\ref{bib:Terpstra}], we infer that $f^1$ is a sum of squares (at least one), which means by (\ref{4.1}) that in fact one can subtract a perfect square form $f$ still preserving the quasiconvexity of $f,$ i.e., $f$ is not a weak extremal, which contradicts part (ii) of Theorem~\ref{th:1.2}. Consequently we must have $\alpha>0$ and
$\det(\BT^1(\By))=\alpha \det(\BT(\By))>0$ whenever $\det(\BT(\By))>0.$ Also it is important to note that $\varphi$ is necessarily a third degree polynomial. Choose again $\By_0\in\mathbb R^3$ (as above) such that $\det(\BT(\By_0))>0.$ Hence setting
$\BA=\BT(\By_0)$ and $\BB=\BT^1(\By_0)$ we have $\det(\BA)\geq \det(\BB)>0,$ where $\BA,\BB\in\mathbb R^{3\times 3}$ are symmetric positive definite matrices, thus the square root $\BB^{1/2}$ and the inverse $\BB^{-1/2}$ exist and are symmetric. Next we have from (\ref{4.2}) and (\ref{4.6}),
\begin{align*}
P(\lambda)&=\det(\BA-\lambda\BB)\\
&=\det (\BB^{1/2}(\BB^{-1/2}\BA\BB^{-1/2}-\lambda\BI)\BB^{1/2})\\
&=\det(\BB)\det(\BB^{-1/2}\BA\BB^{-1/2}-\lambda\BI)\\
&=\det(\BA)\varphi(\lambda),
\end{align*}
which gives
$$\varphi(\lambda)=\alpha\cdot\det(\BB^{-1/2}\BA\BB^{-1/2}-\lambda\BI),$$
thus the roots of $\varphi$ are real as $\varphi$ is a scalar multiple of the characteristic polynomial of the symmetric matrix
$\BB^{-1/2}\BA\BB^{-1/2}.$ On the other hand (\ref{4.7}) implies that all three roots of $\varphi$ belong to the interval $[1,\infty).$ Denoting them by $1\leq \lambda_1\leq \lambda_2\leq \lambda_3$ we have
$$\varphi(t)=(1-\gamma\lambda+\beta\lambda^2-\alpha\lambda^3)=-\alpha(\lambda-\lambda_1)(\lambda-\lambda_2)(\lambda-\lambda_3),$$
and we have by Vieta's theorem the formulae
\begin{equation}
\label{4.8}
\alpha=\frac{1}{\lambda_1\lambda_2\lambda_3},\quad\beta=\frac{\lambda_1+\lambda_2+\lambda_3}{\lambda_1\lambda_2\lambda_3},
\quad \gamma=\frac{\lambda_1\lambda_2+\lambda_2\lambda_3+\lambda_3\lambda_1}{\lambda_1\lambda_2\lambda_3},
\end{equation}
which will be utilized in the next steps. Next introduce the biquadratic form
$$g(\Bx\otimes\By)=f(\Bx\otimes\By)-\lambda_1f^1(\Bx\otimes\By).$$
The strategy from here on is to either prove that $g$ is identically zero, which will imply that $f_1$ is a multiple of $f$, or otherwise to arrive at a contradiction in the case when $\det(\BT(\By))$ is not a perfect square, or prove that $f$ is polyconvex in the case when $\det(\BT(\By))$ is a perfect square. We have from (\ref{4.6}) that $\det(\BS(\By))\equiv 0$ for $\By\in\mathbb R^3,$ where $\BS(\By)=(s_{ij}(\By))_{1\leq i,j\leq 3}$ is the acoustic tensor of $g,$ i.e. $g(\Bx\otimes\By)=\Bx \BS(\By)\Bx^T$. Note that as $f$ and $f_1$ are linearly independent, then the form $g$ is not identically zero. Next we aim to prove that the diagonal entries of the cofactor matrix $\cof(\BS(\By))$ are nonnegative. If they all vanish identically, then there is nothing to prove. Assume for instance $\cof(\BS(\By))_{33}$ does not vanish identically, then the set $\{\By\in\mathbb R^3 : \cof(\BS(\By))_{33}=0\}$ is a null set, thus because $\det(\BS(\By))\equiv 0,$ the last row of $\BS$ must be a linear combination of the first two for a.e. $\By\in\mathbb R^3,$ thus we obtain the form
\begin{equation}
\label{4.9}
\BS(\By)=
\begin{bmatrix}[1.5]
s_{11} & s_{12} & rs_{11}+qs_{12}\\
s_{12} & s_{22} & rs_{12}+qs_{22}\\
rs_{11}+qs_{12} & rs_{12}+qs_{22} & r^2s_{11}+q^2s_{22}+2rqs_{12}
\end{bmatrix},
\end{equation}
where the linear combination coefficients $r$ and $q$ are rational functions given by
\begin{equation}
\label{4.10}
r(\By)=\frac{\cof(\BS(\By))_{13}}{\cof(\BS(\By))_{33}},\qquad q(\By)=-\frac{\cof(\BS(\By))_{23}}{\cof(\BS(\By))_{33}}.
\end{equation}
Note that (\ref{4.9}) also yields the form of the cofactor matrix $\cof(\BS):$
\begin{equation}
\label{4.11}
 \cof(\BS)=
\begin{bmatrix}[1.5]
r^2\cdot\cof(\BS)_{33} & rq\cdot\cof(\BS)_{33} & -r\cdot\cof(\BS)_{33} \\
rq\cdot\cof(\BS)_{33} & q^2\cdot\cof(\BS)_{33} & -q\cdot\cof(\BS)_{33} \\
-r\cdot\cof(\BS)_{33} & -q\cdot\cof(\BS)_{33} & \cof(\BS)_{33}
\end{bmatrix}.
\end{equation}
Now using the equality $f=g+\lambda_1f_1$ and formula (\ref{4.2}) we get
\begin{align*}
\det(\BT(\By))&=\det(\BS(\By)+\lambda_1\BT^1(\By))\\
&=\det(\BS(\By))+\lambda_1\sum_{i,j=1}^3t^1_{ij}(\By)\cof(\BS(\By))_{ij}+\lambda_1^2\sum_{i,j=1}^3s_{ij}(\By)\cof(\BT^1(\By))_{ij}
+\lambda_1^3\det(\BT^1(\By))\\
&=\lambda_1\sum_{i,j=1}^3t^1_{ij}(\By)\cof(\BS(\By))_{ij}+\lambda_1^2\sum_{i,j=1}^3s_{ij}(\By)\cof(\BT^1(\By))_{ij}+\lambda_1^3\det(\BT^1(\By)),
\end{align*}
hence owing to the first equality in (\ref{4.5}) we obtain
\begin{equation}
\label{4.12}
\sum_{i,j=1}^3t^1_{ij}(\By)\cof(\BS(\By))_{ij}=\left(\frac{1}{\lambda_1}-\lambda_1^2\alpha\right)\det(\BT(\By))
-\lambda_1\sum_{i,j=1}^3s_{ij}(\By)\cof(\BT^1(\By))_{ij}.
\end{equation}
We have further utilizing the first two identities in (\ref{4.5}), that
\begin{align}
\label{4.13}
\sum_{i,j=1}^3s_{ij}(\By)\cof(\BT^1(\By))_{ij}&=\sum_{i,j=1}^3(t_{ij}-\lambda_1t_{ij}^1)(\By)\cof(\BT^1(\By))_{ij}\\ \nonumber
&=(\beta-3\lambda_1\alpha)\det(\BT(\By)),
\end{align}
thus owing back to (\ref{4.12}) we obtain
\begin{equation}
\label{4.14}
\sum_{i,j=1}^3t^1_{ij}(\By)\cof(\BS(\By))_{ij}=\left(\frac{1}{\lambda_1}+2\lambda_1^2\alpha-\lambda_1\beta\right)\det(\BT(\By)).
\end{equation}
Consequently recalling (\ref{4.8}) and (\ref{4.11}) we get from (\ref{4.14}) after some simple algebra,
\begin{equation}
\label{4.15}
\cof(\BS(\By))_{33}\cdot[(-r,-q,1)\BT^1(\By)(-r,-q,1)^T]=\frac{(\lambda_2-\lambda_1)(\lambda_3-\lambda_1)}{\lambda_1\lambda_2\lambda_3}\det(\BT(\By)).
\end{equation}
The last equality suggests considering the following cases separately.\\
\textbf{Case 1: $\lambda_1<\lambda_2.$}\\
\textbf{Case 2: $\lambda_1=\lambda_2<\lambda_3.$}\\
\textbf{Case 3: $\lambda_1=\lambda_2=\lambda_3.$}\\
\textbf{Case 1.} In this case by the fact that $\det(\BT(\By))>0$ a.e. in $\mathbb R^3$ and by the positive semi-definiteness of $\BT^1(\By),$ the equality (\ref{4.15}) immediately implies that $\cof(\BS(\By))_{33}\geq 0$ for all $\By\in\mathbb R^3.$ Therefore we have $\cof(\BS(\By))_{ii}\geq 0$ for all $\By\in\mathbb R^3,$ $i=1,2,3.$ Consider next the following two cases.\\
\textbf{Case 1a: One of the diagonal entries of $\BS(\By)$ is definite.}\\
\textbf{Case 1b: All of the diagonal entries of $\BS(\By)$ are indefinite.}\\
\textbf{Case 1a.} In this case if say $s_{11}(\By)$ is positive semidefinite, then we get by Silvester's criterion that $\BS(\By)$ is positive semidefinite, thus the form $g$ will become a quasiconvex quadratic form that has zero acoustic tensor determinant, thus by Theorem~\ref{2.2} it must be polyconvex. As $g$ is not identically zero, it must be a sum of squares, containing at least one square, thus the condition $f=\lambda_1f^1+g$ will imply that $f$ is not a weak extremal, which contradicts Theorem~\ref{th:1.2} in the case when $\det(\BT(\By))$ is not a perfect square. Considering the case when $\det(\BT(\By))$ is a perfect square, note that part (iv) of Theorem~\ref{th:1.2} together with Theorem~2.2 imply that $f$ has to be either a weak extremal or polyconvex. The case of a weak extremal is again ruled out by the equality $f=\lambda_1f^1+g$ as $f_1$ is quasiconvex and $g$ is nonzero and polyconvex. Thus we conclude that $f$ is polyconvex. Now, in the case when all of $s_{ii}(\By)$ are negative semidefinite, then again by Silvester's criterion we have that $\BS(\By)$ must be negative semidefinite. Recall next the following classical linear algebra (convex analysis) theorem. It has to do with the fact that the convex cone of all $n\times n$ positive semidefinite symmetric matrices is self-dual.
\begin{theorem}
\label{th:4.2}
Let $n\in\mathbb N$ and let $\BA=(a_{ij}),\BB=(b_{ij})\in\mathbb R^{n\times n}$ be symmetric positive semidefinite matrices. Then the inner product of $\BA$ and $\BB$ is nonnegative:
$$\BA\colon\BB=\sum_{i,j=1}^na_{ij}b_{ij}\geq 0.$$
\end{theorem}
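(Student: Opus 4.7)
The plan is to reduce the claim to the rank-one case by exploiting the spectral decomposition of one of the two matrices. Since $\BA$ is real symmetric positive semidefinite, I would invoke the spectral theorem to write $\BA=\sum_{i=1}^n\lambda_i\Bv_i\Bv_i^T$, where $\lambda_i\geq 0$ are the eigenvalues of $\BA$ and $\{\Bv_i\}_{i=1}^n$ is an orthonormal basis of eigenvectors. Then by bilinearity of the Frobenius inner product,
\[
\BA\colon\BB=\sum_{i,j=1}^n a_{ij}b_{ij}=\mathrm{tr}(\BA\BB)=\sum_{i=1}^n\lambda_i\,\mathrm{tr}(\Bv_i\Bv_i^T\BB)=\sum_{i=1}^n\lambda_i\,\Bv_i^T\BB\Bv_i.
\]
Each summand is nonnegative because $\lambda_i\geq 0$ and $\Bv_i^T\BB\Bv_i\geq 0$ by positive semidefiniteness of $\BB$, so the total is nonnegative as required.

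An equivalent and essentially equally short route would be the symmetric-square-root factorization: since $\BA$ is symmetric PSD, there exists a symmetric PSD matrix $\BA^{1/2}$ with $\BA=\BA^{1/2}\BA^{1/2}$, hence
\[
\BA\colon\BB=\mathrm{tr}(\BA\BB)=\mathrm{tr}(\BA^{1/2}\BA^{1/2}\BB)=\mathrm{tr}(\BA^{1/2}\BB\BA^{1/2}),
\]
and the matrix $\BA^{1/2}\BB\BA^{1/2}$ is symmetric PSD (as a congruence of the PSD matrix $\BB$), so its trace, being the sum of its nonnegative eigenvalues, is nonnegative.

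There is really no serious obstacle here; the only thing to be careful about is making sure one identifies $\BA\colon\BB$ with $\mathrm{tr}(\BA\BB)$ correctly (which uses the symmetry of the matrices so that the Frobenius inner product coincides with the trace of the product), and that one uses the fact that a symmetric matrix admitting the factorization $\BM=\BN\BN^T$ is automatically PSD, so its trace is nonnegative. The result expresses the self-duality of the cone of symmetric PSD matrices under the Frobenius inner product, which is precisely the structural fact invoked in the subsequent argument of the paper.
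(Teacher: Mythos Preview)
Your argument is correct. Both routes you give---the spectral decomposition of $\BA$ followed by the estimate $\lambda_i\,\Bv_i^T\BB\Bv_i\geq 0$, and the square-root/cyclic-trace computation $\mathrm{tr}(\BA\BB)=\mathrm{tr}(\BA^{1/2}\BB\BA^{1/2})\geq 0$---are standard and complete proofs of the statement. Your identification $\BA\colon\BB=\mathrm{tr}(\BA\BB)$ is fine here since both matrices are symmetric.

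As for comparison with the paper: the paper does not actually supply a proof of this theorem. It is merely recalled as a classical fact (``the convex cone of all $n\times n$ positive semidefinite symmetric matrices is self-dual'') and then used. So there is nothing to compare your approach against; you have filled in what the paper leaves as background.
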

Note next that (\ref{4.13}) implies the equality
\begin{equation}
\label{4.16}
\sum_{i,j=1}^3s_{ij}(\By)\cof(\BT^1(\By))_{ij}=\frac{\lambda_2+\lambda_3-2\lambda_1}{\lambda_1\lambda_2\lambda_3}\det(\BT(\By)),
\end{equation}
where the right hand side is strictly positive a.e. in $\mathbb R^3,$ while the left hand side is nonpositive due to Thereom~\ref{th:4.2} and the fact that $\BS$ is negative semidefinite and $\cof(\BT^1)$ is positive semidefinite. This gives a contradiction.\\
\textbf{Case 1b.} We start by recalling the following theorem by Marcellini [Corollary~1, \ref{bib:Marcellini}].

\begin{theorem}[Marcellini]
\label{th:4.3}
Let $Q_1$ and $Q_2$ be two quadratic forms in $\mathbb R^n,$ with $Q_2$ indefinite.
If $Q_1(\bm{\xi})=0$ for every $\bm{\xi}$ such that $Q_2(\bm{\xi})=0,$
then there exists $\lambda\in\mathbb R$ such that $Q_1=\lambda Q_2.$
\end{theorem}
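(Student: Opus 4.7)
The plan is to reduce the vanishing hypothesis to a single polynomial identity in a free variable. Fix any $\bm{v}\in\mathbb R^n$ with $Q_2(\bm{v})=0$ and let $\bm{w}\in\mathbb R^n$ be arbitrary. Writing $B_i$ for the polar bilinear form of $Q_i$, we have
\[
Q_2(\bm{v}+t\bm{w})=2t\,B_2(\bm{v},\bm{w})+t^2\,Q_2(\bm{w}),
\]
which, for $\bm{w}$ with $Q_2(\bm{w})\neq 0$, vanishes at $t=0$ and at $t_{*}=-2B_2(\bm{v},\bm{w})/Q_2(\bm{w})$. Imposing $Q_1(\bm{v}+t_{*}\bm{w})=0$ and clearing denominators yields
\[
B_2(\bm{v},\bm{w})\bigl[B_1(\bm{v},\bm{w})\,Q_2(\bm{w})-B_2(\bm{v},\bm{w})\,Q_1(\bm{w})\bigr]=0
\]
on the Zariski-open set $\{Q_2(\bm{w})\neq 0\}$, hence on all of $\mathbb R^n$ as a polynomial identity in $\bm{w}$.

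Next I would pick $\bm{v}\in\{Q_2=0\}$ outside the radical of $Q_2$, i.e., with $M_2\bm{v}\neq 0$, where $M_2$ is the Gram matrix of $Q_2$. Such a $\bm{v}$ exists precisely because $Q_2$ is indefinite: in the nondegenerate case the radical is $\{0\}$ while $\{Q_2=0\}$ is a nontrivial quadric cone, and in the degenerate case the restriction of $Q_2$ to any complement of its radical is still indefinite and supplies a suitable vector. For this $\bm{v}$ the linear form $\bm{w}\mapsto B_2(\bm{v},\bm{w})$ is not identically zero, so cancellation in the integral domain $\mathbb R[\bm{w}]$ leaves
\[
B_1(\bm{v},\bm{w})\,Q_2(\bm{w})=B_2(\bm{v},\bm{w})\,Q_1(\bm{w}).
\]

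I would then split on reducibility of $Q_2$ over $\mathbb R$. If $Q_2$ is irreducible, it is prime in $\mathbb R[\bm{w}]$; since the linear form $B_2(\bm{v},\cdot)$ is coprime to the irreducible quadratic $Q_2$, the identity forces $Q_2\mid Q_1$, and comparing degrees gives $Q_1=\lambda Q_2$ for some $\lambda\in\mathbb R$. If $Q_2$ is reducible, then (since an indefinite quadratic form cannot be a scalar multiple of a perfect square) $Q_2=L_1L_2$ with linearly independent real linear forms $L_1,L_2$. Vanishing of $Q_1$ on the hyperplane $\{L_1=0\}\subset\{Q_2=0\}$ yields $L_1\mid Q_1$; writing $Q_1=L_1M$ for some linear $M$ and using vanishing of $Q_1$ on $\{L_2=0\}$ together with linear independence of $L_1,L_2$ forces $L_2\mid M$, so $Q_1=cL_1L_2=cQ_2$.

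The step I expect to be most delicate is locating $\bm{v}\in\{Q_2=0\}\setminus\ker M_2$, which is where the indefiniteness of $Q_2$ enters decisively; the conclusion clearly fails for $Q_2$ semidefinite, since then $\{Q_2=0\}=\ker M_2$ and the identity I derive collapses trivially. A fully elementary alternative would be to diagonalize $Q_2$ into the canonical form $\sum\varepsilon_ix_i^2$ with $\varepsilon_i\in\{0,\pm 1\}$ (both nonzero signs present), exploit the explicit null vectors $x_i\pm x_j$ with $\varepsilon_i=+1,\,\varepsilon_j=-1$ (and, in the degenerate case, additional translates in $\ker M_2$), and read off the coefficients of $Q_1-\lambda Q_2$ one at a time; this avoids the irreducibility discussion but requires more bookkeeping.
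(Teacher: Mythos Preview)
The paper does not prove this statement at all: it is quoted verbatim as Corollary~1 of Marcellini's 1984 paper and invoked as a black box in the analysis of Case~1b. So there is no ``paper's own proof'' to compare against.

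Your argument is correct and self-contained. The key identity
\[
B_2(\bm{v},\bm{w})\bigl[B_1(\bm{v},\bm{w})\,Q_2(\bm{w})-B_2(\bm{v},\bm{w})\,Q_1(\bm{w})\bigr]=0
\]
is derived cleanly, and the choice of $\bm{v}\in\{Q_2=0\}\setminus\ker M_2$ is justified exactly as you indicate: restrict $Q_2$ to a complement of its radical, where it is nondegenerate and still indefinite, hence has a nontrivial isotropic vector. The reducible/irreducible dichotomy for $Q_2$ is the right organizing principle, since an indefinite quadratic form is reducible over $\mathbb R$ precisely when its rank is at most $2$; in the reducible case your direct factorisation $Q_2=L_1L_2$ bypasses the bilinear identity entirely, and in the irreducible case a single well-chosen $\bm{v}$ already forces $Q_2\mid Q_1$ by primality. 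One cosmetic remark: in the reducible branch you should allow $M\equiv 0$ (i.e.\ $Q_1\equiv 0$, giving $\lambda=0$), which your phrasing ``for some linear $M$'' slightly obscures but does not exclude.
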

From the fact that
$$\cof(\BS)_{33}=s_{11}s_{22}-s_{12}^2\geq 0,$$
we have $s_{12}(\By)=0$ whenever $s_{11}(\By)=0.$ As $s_{11}$ is indefinite, we have by Marcellini's theorem that
\begin{equation}
\label{4.17}
s_{12}=a_{12}s_{11}\quad\text{for some}\quad a_{12}\in\mathbb R.
\end{equation}
We aim to prove next that $s_{22}$ is a multiple of $s_{11}$ as well. To that end we recall another lemma proven in [\ref{bib:Har.Mil.3}].
\begin{lemma}
\label{le:4.4}
Assume $Q(\bm{\xi})$ is an indefinite quadratic form in $n$ variables that vanishes at a point $\bm{\xi^0}=(\xi_1^0,\xi_2^0,\dots,\xi_n^0).$ Then given any open neighbourhood $U$ of the point $\bm{\xi^0}$ there exist two open subsets $U_1,U_2\subset U$ such that
$$Q(\bm{\xi})<0,\quad \bm{\xi}\in U_1\quad\text{and}\quad Q(\bm{\xi})>0,\quad \bm{\xi}\in U_2.$$
\end{lemma}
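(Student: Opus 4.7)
\textbf{Proof proposal for Lemma~\ref{le:4.4}.}
The plan is to perturb around $\bm{\xi}^0$ along a carefully chosen direction and read off the sign of $Q$ from a Taylor expansion. Let $B$ denote the symmetric bilinear form associated to $Q$, so that
\[
Q(\bm{\xi}^0+t\bm{v}) \;=\; Q(\bm{\xi}^0) + 2t\,B(\bm{\xi}^0,\bm{v}) + t^2 Q(\bm{v}) \;=\; 2t\,B(\bm{\xi}^0,\bm{v}) + t^2 Q(\bm{v}),
\]
since $Q(\bm{\xi}^0)=0$ by hypothesis. Given the open neighbourhood $U$, I will produce one point in $U$ where $Q>0$ and one point in $U$ where $Q<0$; since $Q$ is a polynomial, hence continuous, taking sufficiently small open balls around those points inside $U$ yields the required $U_1$ and $U_2$.

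I would split the argument into two cases according to whether $\bm{\xi}^0$ lies in the kernel of $B$. First, suppose $B(\bm{\xi}^0,\cdot)$ is not identically zero, and pick $\bm{v}\in\mathbb{R}^n$ with $B(\bm{\xi}^0,\bm{v})\neq 0$. For $|t|$ small enough the linear term $2t\,B(\bm{\xi}^0,\bm{v})$ dominates the quadratic term $t^2 Q(\bm{v})$ in absolute value, so $Q(\bm{\xi}^0+t\bm{v})$ has the same sign as $2t\,B(\bm{\xi}^0,\bm{v})$. Choosing $t$ positive and then negative, and taking $|t|$ small enough that $\bm{\xi}^0+t\bm{v}\in U$, produces the two points of opposite signs inside $U$.

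Second, suppose $B(\bm{\xi}^0,\cdot)\equiv 0$. Then the expansion collapses to $Q(\bm{\xi}^0+t\bm{v}) = t^2 Q(\bm{v})$. Since $Q$ is indefinite, we may pick $\bm{a},\bm{b}\in\mathbb{R}^n$ with $Q(\bm{a})>0$ and $Q(\bm{b})<0$. Taking $t\neq 0$ sufficiently small that both $\bm{\xi}^0+t\bm{a}$ and $\bm{\xi}^0+t\bm{b}$ lie in $U$, we obtain $Q(\bm{\xi}^0+t\bm{a}) = t^2 Q(\bm{a}) > 0$ and $Q(\bm{\xi}^0+t\bm{b}) = t^2 Q(\bm{b}) < 0$, as required.

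No step here looks hard: the only mild subtlety is remembering to invoke continuity at the end to upgrade from a single point to an open subset of $U$. I expect the case split on whether $\bm{\xi}^0$ lies in the radical of $B$ to be the only structural decision in the proof, and the degenerate case (where the linear term vanishes) is precisely where the indefiniteness hypothesis on $Q$ is indispensable.
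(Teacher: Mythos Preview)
Your argument is correct: the Taylor expansion $Q(\bm{\xi}^0+t\bm{v})=2t\,B(\bm{\xi}^0,\bm{v})+t^2Q(\bm{v})$, the case split on whether $\bm{\xi}^0$ lies in the radical of $B$, and the appeal to continuity to pass from points to open subsets all work cleanly. Note that the paper does not actually supply a proof of this lemma; it merely recalls it from the earlier work~[\ref{bib:Har.Mil.3}], so there is no in-paper argument to compare against.
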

Let us prove that $s_{11}(\By)=0$ implies $s_{22}(\By)=0.$ Assume in contradiction $s_{11}(\By^0)=0$ and $s_{22}(\By^0)\neq 0$ for some $\By^0\in\mathbb R^3.$ Let the open neighborhood $U$ of $\By^0$ be such that $s_{22}$ does not vanish and does not change sign in $U.$ Then by Lemma~\ref{le:4.4}, the form $s_{11}$ admits both positive and negative values within $U,$ thus we can find a point $\By^1\in U$ such that $s_{11}(\By^1)s_{22}(\By^1)<0,$ which contradicts the condition $\cof(\BS)_{33}=s_{11}s_{22}-s_{12}^2\geq 0$. Consequently $s_{11}(\By)=0$ implies $s_{22}(\By)=0,$ thus again by Marcellini's theorem above we get
\begin{equation}
\label{4.18}
s_{22}=a_{22}s_{11}\quad\text{ for some}\quad a_{22}\in\mathbb R.
\end{equation}
The above analysis carried out for all other diagonal elements of the cofactor matrix $\cof(\BS)$ yields the form of the matrix $\BS:$
\begin{equation}
\label{4.19}
 \BS(\By)=s_{11}(\By)\cdot
\begin{bmatrix}[1.5]
a_{11} & a_{12} & a_{13} \\
a_{12} & a_{22} & a_{23} \\
a_{13} & a_{23} & a_{33}
\end{bmatrix},
\end{equation}
where the matrix $\BA=(a_{ij})$ has zero determinant and nonnegative second order principal minors, thus by Silvester's criterion it is either positive or negative semidefnite. In both cases we have by (\ref{4.16}) that
\begin{equation}
\label{4.20}
s_{11}(\By)\cdot [\BA\colon\cof(\BT^1(\By))]=\frac{\lambda_2+\lambda_3-2\lambda_1}{\lambda_1\lambda_2\lambda_3}\det(\BT(\By)),
\end{equation}
where the right hand side of (\ref{4.20}) takes strictly positive values a.e. in $\mathbb R^3,$ while the left hand side does not, due to the constant sign of the inner product $\BA\colon\cof(\BT^1(\By)),$ the semi-definiteness of $s_{11},$ and Lemma~\ref{le:4.4}.
This does finish Case 1b.\\
\textbf{Case 2.} Like in Case 1 we first prove that all diagonal entries of $\cof(\BS)$ are nonnegative. For the entry $\cof(\BS)_{33}$ we have that it is either identically zero, or if not then the zero set $\{\By\in\mathbb R^3 \ : \ \cof(\BS)_{33}(\By)=0\}$ has zero Lebesgue measure, thus the steps leading to (\ref{4.15}) go through and we obtain by (\ref{4.15}) that
\begin{equation}
\label{4.21}
\cof(\BS(\By))_{33}\cdot[(-r,-q,1)\BT^1(\By)(-r,-q,1)^T]\equiv 0.
\end{equation}
For the points $\By\in\mathbb R^3$ with $\cof(S(\By))_{33}\neq 0$ we will get that $(-r,-q,1)\BT^1(\By)(-r,-q,1)^T=0,$ hence, as the vector $(-r,-q,1)\neq 0$ we obtain $\det(\BT^1(\By))=0$ by the positive semi-definiteness of $\BT^1.$ Consequently as the zero determinant set
$\{\By\ : \ \det(\BT^1(\By))=0\}$ is the same as the set $\{\By\ : \ \det(\BT(\By))=0\},$ which is a null set, then $\cof(\BS(\By))_{33}\equiv0$.
This argument yields the conditions:
\begin{equation}
\label{4.22}
\cof(\BS(\By))_{11}=\cof(\BS(\By))_{22}=\cof(\BS(\By))_{33}\equiv 0.
\end{equation}
Once (\ref{4.22}) is established we will get the desired results following the steps in Case 1a and Case 1b, where (\ref{4.16}) and the fact that the coefficient $\frac{\lambda_2+\lambda_3-2\lambda_1}{\lambda_1\lambda_2\lambda_3}$ on the right hand side is positive were used.\\
\textbf{Case 3.} The proof of this case immediately follows from (\ref{4.5}), (\ref{4.6}), and (\ref{4.8}). Indeed bearing in mind that $\alpha>0$ and $\det(\BT^1)=\alpha\det(\BT)>0$ a.e. in $\mathbb R^3$, putting together
(\ref{4.5}), (\ref{4.6}), and (\ref{4.8}) and making the change of variables $t=\frac{\lambda}{\lambda_1}$ we obtain
$$\det((\lambda_1\BT^1)^{-1/2}\BT(\lambda_1\BT^1)^{-1/2}-t\BI)=(1-t)^3,\qquad t\in\mathbb R, \ \By\in\mathbb R^3.$$
The last identity implies that the diagonal form of the symmetric matrix $(\lambda_1\BT^1)^{-1/2}\BT(\lambda_1\BT^1)^{-1/2}$ must coincide with the identity matrix for all $\By\in\mathbb R^3,$ thus we get $\BT=\lambda_1\BT^1,$ i.e., $\BT^1$ is a multiple of $\BT.$\\

\end{proof}

\section{Proof of Theorem 2.2}
\label{sec:4}
\setcounter{equation}{0}

\begin{proof}[Proof of Theorem 2.2] Let us mention that some parts of the proof are borrowed from [\ref{bib:Har.Mil.3}] with
minor changes, but we choose to repeat them here for the convenience of the reader. Assume the quadratic form $f(\bm{\xi})=\bm{\xi}\BC \bm{\xi}^T=\Bx^T \BT(\By)\Bx$ is quasiconvex such that $\det(\BT(\By))\equiv 0$ for $\By\in\mathbb R^3.$ We will basically prove here that if the entries of a symmetric matrix 
$\BT(\By)\in\mathbb R^{3\times 3}$ are quadratic forms in $\By\in\mathbb R^3,$ such that $\BT(\By)$ is positive semidefinite for all $\By\in \mathbb R^3$ and $\det(\BT(\By))\equiv 0,$ then the biquadratic form $f(\Bx\otimes\By)=\Bx^T \BT(\By)\Bx$ is a sum of squares. We can without loss of generality assume that the third row of $\BT$ is a linear combination of the first two for a.e. $\By\in\mathbb R^3$ with rational coefficients
$a(\By)$ and $b(\By),$ thus due to the symmetry, the matrix $\BT(\By)$ must have the form
\begin{equation}
\label{3.1}\BT(\By)=
\begin{bmatrix}[1.5]
t_{11} & t_{12} & at_{11}+bt_{12}\\
t_{12} & t_{22} & at_{12}+bt_{22}\\
at_{11}+bt_{12} & at_{12}+bt_{22} & a^2t_{11}+b^2t_{22}+2abt_{12}
\end{bmatrix},
\end{equation}
where the rational functions $a$ and $b$ are given by
\begin{equation}
\label{3.2}
a=\frac{\cof(\BT)_{13}}{\cof(\BT)_{33}},\qquad b=-\frac{\cof(\BT)_{23}}{\cof(\BT)_{33}}.
\end{equation}
For the form $f$ we get
\begin{align}
\label{3.3}
f(\Bx\otimes\By)&=x_1^2t_{11}+2x_1x_2t_{12}+x_2^2t_{22}+2x_1x_3(at_{11}+bt_{12})\\ \nonumber
&+2x_2x_3(at_{12}+bt_{22})+x_3^2(a^2t_{11}+b^2t_{22}+2abt_{12}).
\end{align}
Next we have from the fact $\det{\BT(\By)}=0$ for all $y\in\mathbb R^3$ that
\begin{equation}
\label{3.4}
\mathrm{rank}(\cof(\BT(\By))\leq 1,\quad\text{for all}\quad \By\in\mathbb R^3.
\end{equation}
In order to make sense of (\ref{3.2}) with no fear about the denominators vanishing, we need to consider the case $\cof(\BT(\By))_{33}\equiv 0$ for all $\By\in\mathbb R^3$ separately. Thus assume first it is the case. Observe that if one of the diagonal elements of $\BT(\By)$ is identically zero, then by the positive semidefiniteness of $\BT(\By)$ the elements in the same row and column of $\BT(\By)$ must be identically zero too, thus
$f(\Bx\otimes \By)$ becomes a $2\times 2$ form, and its quasiconvexity automatically implies convexity. Assume now that all diagonal entries of $\BT(\By)$ are nonzero positive semidefinite quadratic forms in $\By\in\mathbb R^3.$ By the positive semi-definiteness of $\cof(\BT(\By)),$ we then have
\begin{equation}
\label{3.5}
\cof(\BT(\By))_{33}=\cof(\BT(\By))_{32}=\cof(\BT(\By))_{31}\equiv 0,\quad \By\in\mathbb R^3.
\end{equation}
The conditions (\ref{3.5}) imply that the matrix obtained from $\BT(\By)$ by removing the last row has rank at most one. We will obtain a more
explicit form of $\BT(\By)$ by means of the following representation lemma proven in [\ref{bib:Har.Mil.3}].
\begin{lemma}
\label{lem:3.1}
Assume $\Bx\in\mathbb R^d$ and $\BA(\Bx)=(a_{ij}(\Bx)),$ $i=1,\dots, m,\  j=1,\dots, n$ is an $m\times n$ matrix with polynomial coefficients, such that each entry $a_{ij}(x)$ is a homogeneous polynomial of degree $2p,$ where $m,n,d,p\in\mathbb N.$ If $\mathrm{rank}(\BA(\Bx))\leq 1$ for all $\Bx\in\mathbb R^d$, then there exist homogeneous polynomials $b_i(\Bx)$ and $c_i(\Bx),$
such that $a_{ij}(\Bx)=b_i(\Bx)c_j(\Bx),$ for $i=1,\dots, m, \ j=1,\dots, n.$
\end{lemma}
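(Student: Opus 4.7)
The strategy is to convert the pointwise rank-at-most-one hypothesis into a polynomial identity among the $2\times 2$ minors of $\BA$, and then exploit unique factorization in $\mathbb R[\Bx]=\mathbb R[x_1,\dots,x_d]$ to produce the factorization $a_{ij}=b_i c_j$, carefully preserving homogeneity at each step.

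First, since $\mathrm{rank}(\BA(\Bx))\leq 1$ at every real point, every $2\times 2$ minor of $\BA(\Bx)$ vanishes on all of $\mathbb R^d$; a polynomial in $\mathbb R[\Bx]$ that is zero at every real point is the zero polynomial, so we obtain the identity
\[
a_{ij}(\Bx)\,a_{kl}(\Bx)=a_{il}(\Bx)\,a_{kj}(\Bx)\qquad\text{in }\ \mathbb R[\Bx]
\]
for all indices $i,k,j,l$. If $\BA\equiv 0$ the lemma is trivial, so assume some entry $a_{i_0 j_0}\not\equiv 0$. Let $g$ be a greatest common divisor of the $j_0$-th column entries $\{a_{1j_0},\dots,a_{mj_0}\}$, chosen homogeneous — legitimate because in the graded unique factorization domain $\mathbb R[\Bx]$ every factor of a homogeneous polynomial is itself homogeneous up to a unit. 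Write $a_{ij_0}=g\,b_i$ with each $b_i$ homogeneous of a common degree and with $\gcd(b_1,\dots,b_m)=1$.

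Specializing the minor identity to $l=j_0$ yields $g b_{i_0}\,a_{ij}=g b_i\,a_{i_0 j}$, and cancelling the nonzero $g$ in the integral domain $\mathbb R[\Bx]$ gives $b_{i_0}\,a_{ij}=b_i\,a_{i_0 j}$ for all $i,j$. The crucial step — and the main obstacle — is to show that $b_{i_0}$ itself divides $a_{i_0 j}$ in $\mathbb R[\Bx]$ for every $j$. For any irreducible $p$ dividing $b_{i_0}$, the coprimality $\gcd(b_1,\dots,b_m)=1$ supplies some $i_\ast$ with $p\nmid b_{i_\ast}$; then from $b_{i_0}\mid b_{i_\ast}\,a_{i_0 j}$ together with $v_p(b_{i_\ast})=0$ one concludes $v_p(b_{i_0})\leq v_p(a_{i_0 j})$, and running this over all primes $p\mid b_{i_0}$ gives $b_{i_0}\mid a_{i_0 j}$ in $\mathbb R[\Bx]$.

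Finally, define $c_j:=a_{i_0 j}/b_{i_0}$, again homogeneous by the graded-UFD remark. Substituting $a_{i_0 j}=b_{i_0}c_j$ into $b_{i_0}a_{ij}=b_i a_{i_0 j}$ and cancelling $b_{i_0}$ (nonzero, since $a_{i_0 j_0}=gb_{i_0}\neq 0$) yields the desired factorization $a_{ij}(\Bx)=b_i(\Bx)\,c_j(\Bx)$; the degrees are automatically consistent, with all $b_i$ of some common degree $k$ and all $c_j$ of degree $2p-k$. Beyond the initial passage from pointwise rank to polynomial identity, the argument is purely algebraic and rests on the UFD structure of $\mathbb R[\Bx]$ together with the stability of homogeneity under gcds and quotients in a graded UFD.
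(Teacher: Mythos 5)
Your proof is correct: the passage from pointwise rank $\leq 1$ to the identical vanishing of all $2\times 2$ minors, the extraction of a homogeneous $\gcd$ from one nonzero column, and the valuation argument showing $b_{i_0}\mid a_{i_0j}$ (using that the index $i_\ast$ may depend on the irreducible $p$) are all sound, and homogeneity is preserved throughout since any divisor of a nonzero homogeneous polynomial in $\mathbb R[x_1,\dots,x_d]$ is homogeneous. Note that the paper itself does not prove this lemma but cites [Har.\ Mil.\ 3] for it; your UFD-based factorization is the standard argument and matches the approach taken there, so there is nothing substantive to flag.
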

If $t_{11}(\By)$ is irreducible in the field of reals, then by Lemma~\ref{lem:3.1} and the obtained rank conditions, the matrix $\BT(\By)$ must have the form
\begin{equation}
\label{3.6}
\begin{bmatrix}[1.5]
P & \alpha P & \beta P\\
\alpha P & \alpha^2 P & \alpha\beta P\\
\beta P & \alpha\beta P & Q
\end{bmatrix},\quad \alpha,\beta\in\mathbb R,
\end{equation}
where $P$ and $Q$ are nonzero positive semidefinite quadratic forms and $\alpha,\beta\in\mathbb R$ with $\alpha\neq 0.$
In the same way if $t_{11}(\By)$ is reducible in the field of reals, then it must be the square of a linear form, thus again by Lemma~\ref{lem:3.1} and the obtained rank conditions the matrix $\BT(\By)$ must have the form
\begin{equation}
\label{3.7}
 \begin{bmatrix}[1.5]
l_1^2 & l_1l_2 & l_1l_3\\
l_1l_2 & l_2^2 & l_2l_3\\
l_1l_3 & l_2l_3 & Q
\end{bmatrix},
\end{equation}
where $Q$ is a nonzero positive semidefinite quadratic form and $l_1,l_2,l_3$ are linear forms with $l_1$ and $l_2$ nonzero.
In the situation of (\ref{3.6}) we have that
$$0\leq \cof(\BT(\By))_{11}=\alpha^2P(\By)(Q(\By)-\beta^2P(\By)),$$
thus
$$Q(\By)-\beta^2P(\By)\geq 0,$$
which implies, that the form $Q(\By)-\beta^2P(\By)$ is convex, i.e.,
\begin{equation}
\label{3.8}
Q(\By)=\beta^2P(\By)+R(\By),
\end{equation}
where $R(\By)$ is convex. Consequently we get
$$f(\Bx\otimes \By)=P(\By)(x_1+\alpha x_2+\beta x_3)^2+R(\By)x_3^2$$
is polyconvex. In the situation of (\ref{3.7}), similarly we get that
$$Q(\By)=l_3^2(\By)+R(\By),$$
where $R$ is convex and thus
$$f(\Bx\otimes\By)=(l_1(\By)x_1+l_2(\By)x_2+l_3(\By)x_3)^2+R(\By)x_3^2,$$
and is thus polyconvex. In what follows we will assume that all diagonal entries of both matrices $\BT(\By)$ and $\cof(\BT(\By))$ are nonzero. Note that this in particular implies that any of the three rows of $\BT$ is a linear combination of the remaining two, as we have for the third row in (\ref{3.1}). The rank condition (\ref{3.4}) implies by Lemma~4.1 that the cofactor matrix $\cof(\BT)$ has the form
\begin{equation}
\label{3.9}\cof(\BT)=\\
\begin{bmatrix}[1.5]
c_1d_1 & c_1d_2 & c_1d_3\\
c_2d_1 & c_2d_2 & c_2d_3\\
c_3d_1 & c_3d_2 & c_3d_3
\end{bmatrix},
\end{equation}
for some homogeneous polynomials $c_i(\By)$ and $d_i(\By),$ $i=1,2,3$ and $\By\in\mathbb R^3.$ As all diagonal entries of $\cof(\BT(\By))$ are polynomials of degree four, and $\deg(c_id_j)\leq 4$ for $i,j=1,2,3,$ we must in fact have
\begin{equation}
\label{3.10}
\deg(c_id_j)=4,\quad\text{for all}\quad i,j=1,2,3.
\end{equation}
The cofactor matrix $\cof(\BT(\By))$ must be positive semidefinite for all $\By\in\mathbb R^3$ given that $\BT(\By)$ is such, thus we get the set of inequalities
\begin{equation}
\label{3.11}
c_i(\By)d_i(\By)\geq 0,\quad\text{for all}\quad \By\in\mathbb R^3,\ \ i=1,2,3.
\end{equation}
Next we aim to come up with a more explicit form of $\cof(\BT(\By))$ using the obtained conditions (\ref{3.9})-(\ref{3.11}). To that end we consider the cases $\deg(c_1)=0,1,2$ separately (note that the case $\deg(c_1)>2$ implies $\deg(d_1)<2$ and we
can consider $d_1$ instead of $c_1).$\\
\textbf{Case 1: $\deg(c_1)=0$ }.\\
\textbf{Case 2: $\deg(c_1)=1$ }.\\
\textbf{Case 3: $\deg(c_1)=2$ }.\\
Next we examine each case in detail.\\
\textbf{Case 1.} In this case we have from (\ref{3.10}) that
$$
c_i(\By)=c_i\in\mathbb R,\quad\deg(d_i)=4\quad \text{for all}\quad i=1,2,3,
$$
which gives by (\ref{3.2}) $a(\By)=\frac{c_1}{c_3}=a\in\mathbb R$ and $b(\By)=-\frac{c_2}{c_3}=b\in\mathbb R.$
Consequently we get by (\ref{3.3})
\begin{equation}
\label{3.12}
f(\Bx\otimes\By)=t_{11}(\By)(x_1+ax_3)^2+2t_{12}(\By)(x_1+ax_3)(x_2+bx_3)+t_{22}(\By)(x_2+bx_3)^2.
\end{equation}
Terpstra has proven in [\ref{bib:Terpstra}] the following classical result.
\begin{theorem}[Terpstra]
\label{th:3.1}
Any $2\times n$ (or $n\times 2$) quasiconvex quadratic form is polyconvex.
\end{theorem}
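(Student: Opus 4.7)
The plan is to prove that every quasiconvex quadratic form $f(\bm{\xi})=\bm{\xi}\BC\bm{\xi}^T$ on $\mathbb{R}^{2\times n}$ can be written as
\begin{equation*}
f(\bm{\xi})=\sum_{i}L_i(\bm{\xi})^2+\sum_{1\leq p<q\leq n}\mu_{pq}\bigl(\xi_{1p}\xi_{2q}-\xi_{1q}\xi_{2p}\bigr),
\end{equation*}
for some linear functionals $L_i$ on $\mathbb{R}^{2\times n}$ and real coefficients $\mu_{pq}$. Since the $2\times 2$ minors of $\bm{\xi}$ are Null-Lagrangians, such a representation exhibits $f$ as a convex form plus a polyaffine term, which is precisely polyconvexity.

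First I would package $f$ as $f(\Bx\otimes\By)=\Bx\BT(\By)\Bx^T$, so that quasiconvexity is equivalent to the $2\times 2$ symmetric matrix $\BT(\By)$, whose entries are quadratic forms in $\By$, being positive semidefinite for every $\By\in\mathbb{R}^n$. Splitting the cross block of $\BC$ coupling the two rows of $\bm{\xi}$ into its symmetric and antisymmetric parts, the antisymmetric piece contributes exactly a linear combination of the $2\times 2$ minors of $\bm{\xi}$, hence can be absorbed into the minor term above and discarded for the remaining analysis.

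The pivotal step is to establish the factorization
\begin{equation*}
\BT(\By)=M(\By)^T M(\By),
\end{equation*}
where $M(\By)$ is a $k\times 2$ matrix with linear entries in $\By$. Granted this, $f(\Bx\otimes\By)=|M(\By)\Bx^T|^2=\sum_{i=1}^k L_i(\bm{\xi})^2$, where each $L_i$ is the linear form on $\mathbb{R}^{2\times n}$ obtained by replacing every monomial $x_py_q$ appearing in the $i$-th entry of $M(\By)\Bx^T$ by $\xi_{pq}$. The quadratic form $h(\bm{\xi})=f(\bm{\xi})-\sum_i L_i(\bm{\xi})^2$ then vanishes on every rank-one matrix, and a dimension count shows that the linear span of the $2\times 2$ minors coincides with the kernel of the restriction map from quadratic forms on $\mathbb{R}^{2\times n}$ to biquadratic forms on $\mathbb{R}^2\times\mathbb{R}^n$. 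Consequently $h$ is itself a linear combination of $2\times 2$ minors, which completes the decomposition.

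The main obstacle is the factorization $\BT(\By)=M(\By)^T M(\By)$, which is essentially a matrix-valued sum-of-squares statement. By Hilbert's theorem on nonnegative quadratic forms, each diagonal entry admits a representation $t_{11}=\sum_k\ell_k^2$, $t_{22}=\sum_k m_k^2$ with linear forms $\ell_k,m_k$. The delicate point is aligning these representations so that simultaneously $t_{12}(\By)=\sum_k\ell_k(\By)m_k(\By)$; such an alignment is forced by the Cauchy-Schwarz-type inequality $t_{12}^2\leq t_{11}t_{22}$ built into the positive semidefiniteness of $\BT(\By)$. An alternative route invokes Lemma~\ref{lem:3.1} from the previous section to handle points where $\BT(\By)$ is rank deficient, then proceeds by induction on $n$ and a case analysis on the generic rank of $\BT$, reducing the general situation to elementary low-rank configurations already handled in the proof of Theorem~\ref{th:2.2}.
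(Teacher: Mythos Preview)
The paper does not prove this statement; Theorem~\ref{th:3.1} is quoted as a classical result of Terpstra and invoked as a black box inside the proof of Theorem~\ref{th:2.2}. There is therefore no proof in the paper to compare against, and your proposal must be judged on its own.

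Your outline correctly isolates the two reductions: peel off the antisymmetric part of the cross block as a linear combination of $2\times 2$ minors, and then show that the residual biquadratic form $\Bx^{T}\BT(\By)\Bx$ is a sum of squares of bilinear forms, equivalently that $\BT(\By)=M(\By)^{T}M(\By)$ for some $M$ with linear entries. The dimension count showing that a quadratic form vanishing on all rank-one matrices is a combination of minors is also fine.

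The gap is that the factorization $\BT(\By)=M(\By)^{T}M(\By)$ is precisely Terpstra's theorem in its sum-of-squares guise, and you have not proved it. Writing $t_{11}=\sum_{k}\ell_{k}^{2}$ and $t_{22}=\sum_{k}m_{k}^{2}$ is free, but the assertion that one can ``align'' these so that $t_{12}=\sum_{k}\ell_{k}m_{k}$, and that this alignment is ``forced by the Cauchy--Schwarz-type inequality $t_{12}^{2}\leq t_{11}t_{22}$'', is not an argument. The inequality $t_{12}^{2}\leq t_{11}t_{22}$ is a pointwise constraint; it does not by itself produce linear forms realizing the identity $t_{12}=\sum_{k}\ell_{k}m_{k}$. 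Indeed, $t_{11}t_{22}-t_{12}^{2}$ is a nonnegative quartic in $\By\in\mathbb{R}^{n}$, and for $n\geq 3$ nonnegative quartics are not automatically sums of squares, so no purely scalar SOS reasoning can furnish the matrix factorization. The ``alternative route'' via Lemma~\ref{lem:3.1} and induction on $n$ is only a sketch of a sketch; as written it does not indicate how the induction step is carried out or how the generic-rank cases are resolved. In short, the proposal restates the essential difficulty rather than overcoming it.
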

Note that Terpstra's theorem implies that any $2\times n$ (or $n\times 2$) nonnegative biquadratic form $Q(\Bx,\By)$ is in fact a sum of squares.
Introducing the new independent variables $X_1=x_1+ax_3$ and $X_2=x_2+bx_3$ we have that the $2\times 3$ biquadratic form
$$g(\BX,\By)=t_{11}(\By)X_1^2+2t_{12}(\By)X_1X_2+t_{22}(\By)X_2^2$$
is nonnegative, thus by Terpstra's theorem above $g$ must be the sum of squares of 2-homogeneous forms that are linear combinations of $X_iy_j,$ i.e., $f$ is polyconvex.\\
\textbf{Case 2.} On one hand we similarly have from (\ref{3.10}) that
\begin{equation}
\label{3.13}
\deg(c_i)=1,\quad\deg(d_i)=3\quad \text{for all}\quad i=1,2,3.
\end{equation}
On the other hand we have from (\ref{3.11}) that $c_i$ must divide $d_i$ for each $1\leq i\leq 3,$ thus we get the form of $\cof(\BT):$
\begin{equation}
\label{3.14}\cof(\BT)=
\begin{bmatrix}[1.5]
c_1^2P & c_1c_2Q & c_1c_3R\\
c_1c_2P & c_2^2Q & c_2c_3R\\
c_1c_3P & c_2c_3Q& c_3^2R
\end{bmatrix},
\end{equation}
where $c_i(\By)$ are linear forms and $P(\By),Q(\By),R(\By)$ are positive semidefinite quadratic forms.
Again we get from (\ref{3.2}) that $a(\By)=\frac{c_1(\By)}{c_3(\By)}$ and $b(\By)=-\frac{c_2(\By)}{c_3(\By)},$ and utilizing (\ref{3.3}):
\begin{equation}
\label{3.15}
f(\Bx\otimes\By)=\frac{1}{c_3^2}\left[t_{11}(\By)(x_1c_3+x_3c_1)^2+2t_{12}(\By)(x_1c_3+x_3c_1)(x_2c_3-x_3c_2)+t_{22}(\By)(x_2c_3-x_3c_2)^2\right].
\end{equation}
The goal is to show that we can obtain necessary factorizations and abbreviations to end up with a $2\times 3$ nonnegative biquadratic form that then must be polyconvex by Terpstra's theorem. Consider the biquadratic form $g$ in the variables $\By$ and $\BX=(X_1,X_2)$ given by
\begin{equation}
\label{3.16}
g(\BX,\By)=t_{11}(\By)X_1^2+2t_{12}(\By)X_1X_2+t_{22}(\By)X_2^2.
\end{equation}
We have that $g(x_1c_3+x_3c_1,x_2c_3-x_3c_2,\By)=\frac{f(\Bx\otimes\By)}{c_3^2}\geq 0$
for all $\Bx,\By\in\mathbb R^3$ such that $c_{3}(\By)\neq 0,$ thus we obtain
\begin{equation}
\label{3.17}
g(x_1c_3+x_3c_1,x_2c_3-x_3c_2,\By)\geq 0,\quad\text{for all}\quad \Bx,\By\in\mathbb R^3,
\end{equation}
by continuity as the set $c_3(\By)=0$ is just a hyperplane in $\mathbb R^3.$
For any fixed $\By\in\mathbb R^3$ such that $c_3(\By)\neq0$ and for any fixed values $X_1,X_2,$ we can find values
$x_1,x_2,x_3\in\mathbb R$ such that $x_1c_3+x_3c_1=X_1$ and $x_2c_3-x_3c_2=X_2$ (for instance take $x_1=\frac{X_1}{c_3(\By)},$
$x_2=\frac{X_2}{c_3(\By)},$ and $x_3=0$), thus again by continuity we get from (\ref{3.17}) the condition
\begin{equation}
\label{3.18}
g(\BX,\By)=t_{11}(\By)X_1^2+2t_{12}(\By)X_1X_2+t_{22}(\By)X_2^2\geq 0\quad\text{for all}\quad \BX\in\mathbb R^2, \By\in\mathbb R^3.
\end{equation}
Hence $g$ is a $2\times 3$ nonnegative biquadratic form, and by Terpstra's theorem it must be a sum of squares of 2-homogeneous forms in $X_iy_j:$
\begin{equation}
\label{3.19}
g(\BX,\By)=t_{11}(\By)X_1^2+2t_{12}(\By)X_1X_2+t_{22}(\By)X_2^2=\sum_{k=1}^6(a_k(\By)X_1+b_k(\By)X_2)^2,
\end{equation}
where $a_k(\By)$ and $b_k(\By)$ are linear forms in $\By.$ Thus we discover
\begin{equation}
\label{3.20}
f(\Bx\otimes\By)=\frac{1}{c_3^2}\sum_{k=1}^6(a_k(x_1c_3+x_3c_1)+b_k(x_2c_3-x_3c_2))^2=\frac{1}{c_3^2}\sum_{k=1}^6[c_3(a_kx_1+b_kx_2)+x_3(a_kc_1-b_kc_2)]^2.
\end{equation}
Equating the coefficients of $x_3^2$ in the original form of $f$ and in (\ref{3.20}) we get the key equality
\begin{equation}
\label{3.21}
\sum_{k=1}^6(a_kc_1-b_kc_2)^2=c_3^2t_{33}.
\end{equation}
The condition (\ref{3.21}) in particular implies that $a_k(\By)c_1(\By)-b_k(\By)c_2(\By)=0$ for all $k=1,2,\dots,6$ whenever $c_3(\By)=0.$ Since $c_3$ is linear, this means that $c_3$ divides $ a_kc_1-b_kc_2$ for all $k,$ thus we get
\begin{equation}
\label{3.22}
a_k(\By)c_1(\By)-b_k(\By)c_2(\By)=h_k(\By)c_3(\By),\quad k=1,2,\dots,6,
\end{equation}
for some linear forms $h_k(\By).$ Plugging the obtained forms of $a_kc_1+b_kc_2$ back into (\ref{3.20}), we obtain
\begin{equation}
\label{3.23}
f(\Bx\otimes\By)=\sum_{k=1}^6(a_kx_1+b_kx_2+h_kx_3)^2,
\end{equation}
 i.e, $f$ is polyconvex.\\
\textbf{Case 3.} We have due to (\ref{3.6}) that $\deg(d_i)=\deg(c_i)=2$ for all $i=1,2,3.$ The following two cases are qualitatively different:\\
\textbf{Case 3a: $c_3(\By)$ is indefinite.}\\
\textbf{Case 3b: $c_3(\By)$ is positive semidefinite.}\\
\textbf{Case 3a.} It is easy to verify that the steps (\ref{3.15})-(\ref{3.21}) go through in this case too, thus we have
\begin{equation}
\label{3.24}
\sum_{k=1}^6(a_kc_1-b_kc_2)^2=c_3^2t_{33},
\end{equation}
where $a_k$ and $b_k$ are linear forms for $k=1,2,\dots,6.$ Equality (\ref{3.24}) and positivity of both sides imply that
\begin{equation}
\label{3.25}
a_k(\By)c_1(\By)-b_k(\By)c_2(\By)=0,\quad \text{whenever}\quad c_3(\By)=0,\quad\text{for some}\quad \By\in\mathbb R^3.
\end{equation}
Next we prove the following simple lemma.
\begin{lemma}
\label{le:3.3}
Let $Q(\By)$ be an indefinite quadratic form in the variable $\By\in\mathbb R^3$ and let $P(\By)$ be a third order homogeneous polynomial. If
$P(\By)=0$ whenever $Q(\By)=0$ for some $\By\in\mathbb R^3,$ then $Q$ must divide $P.$
\end{lemma}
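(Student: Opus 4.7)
The plan is to reduce $Q$ to a canonical form by a real linear change of variables, then carry out an explicit polynomial division in each case. Since $Q$ is indefinite and quadratic on $\mathbb R^3$, Sylvester's law of inertia shows that, after an invertible real linear change of variables (which preserves divisibility in $\mathbb R[\By]$) and possibly replacing $Q$ by $-Q$, either $Q$ has rank two and equals $y_1 y_2$, or $Q$ has rank three and equals $y_3^2 - y_1^2 - y_2^2$. I will treat the two cases separately.

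In the rank-two case $Q = y_1 y_2$, the real zero set is the union of the coordinate hyperplanes $\{y_1 = 0\}$ and $\{y_2 = 0\}$. The hypothesis that $P$ vanishes on each hyperplane, together with the primality of $y_1$ and $y_2$ in $\mathbb R[\By]$, will force $y_1 \mid P$ and $y_2 \mid P$; since $y_1$ and $y_2$ are coprime, $Q = y_1 y_2$ will divide $P$ as claimed.

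In the rank-three case $Q = y_3^2 - y_1^2 - y_2^2$, I expand $P$ as a polynomial in $y_3$:
\[
P(\By) = A(y_1, y_2)\, y_3^3 + B(y_1, y_2)\, y_3^2 + C(y_1, y_2)\, y_3 + D(y_1, y_2),
\]
where $A, B, C, D$ are homogeneous in $(y_1, y_2)$ of degrees $0, 1, 2, 3$ respectively. For every $(y_1, y_2) \in \mathbb R^2$ the two points $(y_1, y_2, \pm\sqrt{y_1^2 + y_2^2})$ lie on $\{Q = 0\}$, so the hypothesis gives $P(y_1, y_2, \pm\sqrt{y_1^2 + y_2^2}) = 0$. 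Using $y_3^2 = y_1^2 + y_2^2$ on the cone and taking the sum and difference of these two scalar identities (equivalently, separating the parts of $P$ that are even and odd in $y_3$ modulo $Q$) will yield the two polynomial identities $A(y_1^2 + y_2^2) + C \equiv 0$ and $B(y_1^2 + y_2^2) + D \equiv 0$ in $\mathbb R[y_1, y_2]$. Substituting these back into the expansion of $P$ will factor it as $P = (A y_3 + B)(y_3^2 - y_1^2 - y_2^2) = (A y_3 + B)\, Q$, establishing $Q \mid P$.

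The only delicate step is passing from pointwise vanishing on the cone to genuine polynomial identities in $y_1, y_2$, which needs care because $\sqrt{y_1^2 + y_2^2}$ is not polynomial in $y_1, y_2$; separating the even and odd parts in $y_3$ modulo $y_3^2 - (y_1^2 + y_2^2)$ and invoking that the resulting continuous identities hold on the two-dimensional parametrization $(y_1, y_2) \mapsto (y_1, y_2, \pm\sqrt{y_1^2 + y_2^2})$ of the cone handles this cleanly. I do not expect any substantial obstacle beyond this bookkeeping; the structure of the proof is dictated entirely by the classification of indefinite quadratic forms on $\mathbb R^3$ together with degree considerations.
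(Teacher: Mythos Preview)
Your proof is correct and follows essentially the same approach as the paper: reduce $Q$ to one of the two indefinite normal forms on $\mathbb R^3$, handle the rank-two case via factorization into coprime linear forms, and in the rank-three case substitute $y_3=\pm\sqrt{y_1^2+y_2^2}$ on the cone and separate the parts even and odd in $y_3$. The only cosmetic differences are your choice of $y_1y_2$ instead of $y_1^2-y_2^2$ for the rank-two normal form, and that the paper first performs the division $P=Q\cdot l+y_3\varphi+\psi$ before evaluating on the cone, whereas you expand $P$ in powers of $y_3$ and recover the factor $Ay_3+B$ afterward; these are equivalent bookkeeping choices.
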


\begin{proof}[Proof of Lemma~4.3]
As $Q$ is indefinite, we can without loss of generality assume that it has one of the normal forms:
$$Q(\By)=y_1^2-y_2^2,\quad\text{or}\quad Q(\By)=y_1^2+y_2^2-y_3^2. $$
In the first case we have $Q=(y_1-y_2)(y_1+y_2)$ and thus $P(\By)=0$ whenever one of the linear forms $y_1-y_2$ or $y_1+y_2$ vanishes, thus obviously $P$ is divisible by both, and hence by their product too (by the unique factorization over the field of reals). In the case
$Q(\By)=y_1^2+y_2^2-y_3^2$ we can separate the multiple of $y_3^2$ within $P$ and write
\begin{equation}
\label{3.26}
P(\By)=(y_1^2+y_2^2-y_3^2)l(\By)+y_3\varphi(y_1,y_2)+\psi(y_1,y_2),
\end{equation}
where $l$ is a linear form in $\By$, and $\varphi$ and $\psi$ are homogeneous forms in $y_1$ and $y_2$ of degree two and three, respectively.
Now for any $y_1,y_2\in\mathbb R$ such that $y_1^2+y_2^2>0,$ by choosing $y_3=\pm\sqrt{y_1^2+y_2^2}$ we get the system
\begin{equation}
\label{3.27}
\pm\sqrt{y_1^2+y_2^2}\cdot\varphi(y_1,y_2)+\psi(y_1,y_2)=0,
\end{equation}
which implies first $\psi(y_1,y_2)=0$ and then $\varphi(y_1,y_2)=0,$ i.e., $\psi$ and $\varphi$ identically vanish and thus $P=Ql.$

\end{proof}

Consequently, applying the lemma for the pairs $a_kc_1-b_kc_2$ and $c_3$ we get
\begin{equation}
\label{3.28}
a_kc_1-b_kc_2=c_3h_k,\quad k=1,2,\dots,6,
\end{equation}
for some linear forms $h_k$. Owing back to the form of $f$ in (\ref{3.20}) we arrive at
$$
f(\Bx\otimes\By)=\sum_{k=1}^6(a_kx_1+b_kx_2+h_kx_3)^2,
$$
utilizing (\ref{3.28}), i.e., $f$ is polyconvex.\\
\textbf{Case 3b.} We can assume without loss of generality that all forms $c_i$ and $d_i$ are semidefinite (positive or negative). We divide this case further into two possible cases.\\
\textbf{Case 3ba: $c_3(\By)$ is reducible in the field of reals.}\\
\textbf{Case 3bb: $c_3(\By)$ is irreducible in the field of reals.}\\
\textbf{Case 3ba}. As $c_3$ is semidefinite and reducible, it must be a multiple of the square of a linear form, i.e, $c_3(\By)=\sigma l^2(\By),$
where $\sigma\in\{-1,1\}$ and $l(\By)$ is linear. Again, we can easily verify that the steps (\ref{3.15})-(\ref{3.21}) go through in this case too, thus we have
\begin{equation}
\label{3.29}
\sum_{k=1}^6(a_kc_1-b_kc_2)^2=c_3^2t_{33}=l^4t_{33},
\end{equation}
where $a_k$ and $b_k$ are linear forms for $k=1,2,\dots,6.$ Equality (\ref{3.29}), the linearity of $l,$ and positivity of both parts of the equality imply that all the $3-$homogeneous forms $a_kc_1-b_kc_2$ contain a factor of $l$ for all $k.$ After factoring an $l^2$ out in (\ref{3.29}) we get for the same reason that all of the forms $a_kc_1-b_kc_2$ contain a factor of $l^2,$ i.e., $c_3.$ Denoting $a_kc_1-b_kc_2=c_3h_k$ for $k=1,2,\dots,6$ we again end up with the form
$$
f(\Bx\otimes\By)=\sum_{k=1}^6(a_kx_1+b_kx_2+h_kx_3)^2,
$$
by (\ref{3.20}), hence $f$ is polyconvex.\\
\textbf{Case 3ba.} We assume without loss of generality that $c_i$ and $d_i$ are all irreducible and semidefinite for $i=1,2,3.$ From the fact that the cofactor matrix $\cof(\BT)$ is symmetric, we have the set of equalities
\begin{equation}
\label{3.30}
c_id_j=c_jd_i\quad\text{for all}\quad i,j=1,2,3.
\end{equation}
In the case when $c_1$ and $d_1$ are linearly independent, we get from the equality $c_1d_i=d_1c_i,$ $i=2,3,$ from the irreducibility of the factors in it, and from the unique factorization of homogeneous polynomials in the field of reals, that $c_2=\alpha c_1$ and $c_3=\beta c_1$ for some nonzero $\alpha,\beta\in\mathbb R.$ Again, the steps (\ref{3.15})-(\ref{3.21}) go through and thus (\ref{3.20}) yields the polyconvex form for $f:$
$$
f(\Bx\otimes\By)=\sum_{k=1}^6[a_kx_1+b_kx_2+\frac{x_3}{\beta}(a_k-\alpha b_k)]^2.
$$
In what follows we can assume that $c_i$ and $d_i$ are linearly dependent for all $i=1,2,3.$ After a change of sign in all of $c_i$ and $d_i$ if necessary, multiplying all of $d_i$ by the same factor we can assume that all of $c_i$ are irreducible, positive semidefnite, and
\begin{equation}
\label{3.30}
d_i=c_i\quad\text{for all}\quad i=1,2,3.
\end{equation}
Recall next that by (\ref{3.9}) we have 
\begin{equation}
\label{3.31}
c_3^2+t_{12}^2=t_{11}t_{22},\qquad c_2^2+t_{13}^2=t_{11}t_{33},\qquad c_1^2+t_{23}^2=t_{22}t_{33},
\end{equation}
where we aim at examining the above conditions by factoring in the complex field as
\begin{equation}
\label{3.32}
(t_{12}+ic_3)(t_{12}-ic_3)=t_{11}t_{22}.
\end{equation}
Because each of the factors above is a second order homogeneous polynomial in $\By,$ the factors on the right hand side are purely real and the factors on the left hand side have nonzero imaginary parts, then again by the unique factorization in the field of complex numbers,
we discover that both multipliers $t_{12}+ic_3$ and $t_{12}-ic_3$ must be reducible, and thus both $t_{11}$ and $t_{22}$ must split into the product of two linear forms with complex coefficients as well, i.e.,
\begin{equation}
\label{3.33}
t_{11}=(u_1+iu_1')(v_1-iv_1'),\qquad
t_{22}=(u_2+iu_2')(v_2-iv_2'),
\end{equation}
where $u_1,v_1,u_1',v_1'$ are real linear forms. From the fact that $t_{11}$ is real we get $u_1v_1'=v_1u_1',$ which eventually implies from the unique factorization and positive semidefiniteness of $t_{11}$ that we can choose $u_1,v_1,u_1',v_1'$ so that $u_1=v_1$ and $u_1'=v_1',$
which also gives $t_{11}=(u_1+iu_1')(u_1-iu_1').$ This observation for all $k=1,2,3$ will lead to the forms
\begin{equation}
\label{3.34}
t_{kk}=u_k^2+u_k'^2,\quad k=1,2,3.
\end{equation}
Also, it is straightforward that examining all the equalities in (\ref{3.31}) like in the steps (\ref{3.32})-(\ref{3.33}) we get the possible forms for $t_{ij}:$
\begin{equation}
\label{3.35}
t_{ij}=u_iu_j+\sigma_{ij}u_i'u_j',\quad 1\leq i<j\leq3,
\end{equation}
where $\sigma_{ij}\in \{-1,1\},$ and $\sigma_{ij}=\sigma_{ji}$ for $1\leq i<j\leq3$. In the case when an even number of $\sigma_{ij},$ $(i\neq j)$ are negative (which means zero or two of them), then by changing the sign of one of the linear forms $u_i',$ we can assume without loss of generality that all $\sigma_{ij}$ are positive. 
Hence taking into account the forms of $t_{ij}$ in (\ref{3.34}) and (\ref{3.35}), we get
\begin{align*}
f(\Bx\otimes\By)&=\sum_{i,j=1}^3x_ix_jt_{ij}(\By)\\
&=(x_1u_1(\By)+x_2u_2(\By)+x_3u_3(\By))^2+(x_1u_1'(\By)+x_2u_2'(\By)+x_3u_3'(\By))^2,
\end{align*}
i.e., $f$ is polyconvex. It remains to analysis the case when an odd number of $\sigma_{ij},$ $(i\neq j)$ are negative, in which case we can again assume without loss of generality that $\sigma_{ij}=-1$ for all $1\leq i<j\leq3.$ In this situation we need to further analyze the off-diagonal elements of the cofactor matrix in (\ref{3.9}). Recall that we have the following identities:
\begin{equation}
\label{3.37}
\begin{cases}
t_{kk}=u_k^2+u_k'^2, & \quad k=1,2,3,\\
t_{ij}=u_iu_j-u_i'u_j', & \quad 1\leq i<j\leq3,\\
c_{k}=u_iu_j'+u_ju_i', & \quad \{i,j,k\}=\{1,2,3\}.
\end{cases}
\end{equation}
Consequently, given the values of the entries of $\BT,$ and the values of $c_i$ in (\ref{3.37}), for the off-diagonal elements of the cofactor matrix $\cof(\BT)$ we get the following set of identities:
\begin{equation}
\label{3.38}
\begin{cases}
u_1'(u_2'u_3'-u_2u_3)=u_1(u_2u_3'+u_2'u_3),\\
u_2'(u_1'u_3'-u_1u_3)=u_2(u_1u_3'+u_1'u_3),\\
u_3'(u_1'u_2'-u_1u_2)=u_3(u_1u_2'+u_1'u_2).\\
\end{cases}
\end{equation}
Note first that if the linear forms $u_1$ and $u_1'$ are collinear, i.e., $u_1'=\lambda u_1$ for some $\lambda\in\mathbb R,$ then we can separate the variable $x_1u_1$
from the bi-quadratic form $f(\Bx\otimes\By)$ by extracting a perfect square, ending up with anther nonnegative form g, depending only on the vector variables 
$(x_2,x_3)$ and $\By.$ Hence by Terpstra's theorem (Theorem~4.2), the form $g$, being $2\times 3,$ must be a sum of squares, thus so must be $f$ too, i.e., $f$ is polyconvex. In what follows we assume that none of the pairs $(u_i,u_i')$ is linearly dependent, in particular none of the forms $u_i$ and $u_i'$ is identically zero
for $i=1,2,3.$ Observe that as $u_1$ and $u_1'$ are linear forms, then from the first equality in (\ref{3.38}) we get that $u_2'u_3'-u_2u_3$ must be divisible by $u_1$ and $u_2u_3'+u_2'u_3$ must be divisible by $u_1'.$ 
Let $u_2'u_3'-u_2u_3=v_1u_1,$ thus we also have $u_2u_3'+u_2'u_3=v_1u_1',$ for some linear form $v_1=v_1(\By).$ By analogous observations for the second and third identities in (\ref{3.38}) we arrive at the set of equalities: 
\begin{equation}
\label{3.39}
\begin{cases}
u_2'u_3'-u_2u_3=v_1u_1, \quad u_2u_3'+u_2'u_3=v_1u_1',\\
u_1'u_3'-u_1u_3=v_2u_2,\quad u_1u_3'+u_1'u_3=v_2u_2',\\
u_1'u_2'-u_1u_2=v_3u_3,\quad u_1u_2'+u_1'u_2=v_3u_3',\\
\end{cases}
\end{equation}
for some linear forms $v_1,v_2$, and $v_3.$ Multiplying the first identity by $u_2',$ the second identity by $u_2$ in the first row of (\ref{3.39}) and adding the obtained equalities together we get 
$$u_3'(u_2^2+u_2'^2)=v_1(u_1u_2'+u_1'u_2).$$
Consequently, utilizing the second identity in the third row of (\ref{3.39}), and keeping in mind that the form $u_3'$ is nonzero, we derive from the last equality: 
$$u_2^2+u_2'^2=v_1v_3.$$
Finally, note that the lest equality is equivalent to 
$$(u_2+iu_2')(u_2-iu_2')=v_1v_3,$$
which by the unique factorization is possible if and only if the forms $u_2$ and $u_2'$ are collinear, i.e., we are back in the first situation and hence $f$ is polyconvex. 
This completes the proof of Theorem~\ref{th:2.2}

\end{proof}

\end{document}